\documentclass[a4paper, oneside, 11pt]{amsart} 

\usepackage[english]{babel}
\usepackage[utf8x]{inputenc}
\usepackage[T1]{fontenc}

\usepackage{amsmath}
\usepackage{amssymb}
\usepackage{bbm}
\usepackage{geometry}
\geometry{hmargin=2.5cm, vmargin=3cm}

\usepackage{verbatim}
\usepackage{graphicx}
\usepackage{hyperref}


\newtheorem{thm}{Theorem}[section]
\newtheorem{cor}[thm]{Corollary}
\newtheorem{lem}[thm]{Lemma}
\newtheorem{prop}[thm]{Proposition}

\theoremstyle{remark}
\newtheorem{rem}[thm]{Remark}


\newcommand\R{\mathbb R} 
\newcommand\Z{\mathbb Z} 


\renewcommand{\d}{\mathrm d}
\newcommand{\pd}[1]{\partial #1}


\def\tend_#1{\mathrel{\mathop{\hbox{\rightarrowfill}}\limits_{\>#1\>}}}


\renewcommand{\leq}{\leqslant}
\renewcommand{\geq}{\geqslant}


\newcommand{\T}{\mathrm T} 
\renewcommand{\S}{\mathrm S} 

\DeclareMathOperator{\Wh}{Wh}

\DeclareMathOperator{\GL}{GL}


\title{Contact manifolds with symplectomorphic symplectizations}
\author{Sylvain Courte}

\begin{document}

\begin{abstract}
We provide examples of contact manifolds of any odd dimension $\geq 5$ which
are not diffeomorphic but have exact symplectomorphic symplectizations.
\end{abstract}

\maketitle

\tableofcontents

\section{Introduction} 

Symplectization provides a bridge between contact and symplectic geometry. It associates to any
contact manifold $(M, \xi)$ (namely any manifold $M$ equipped with a cooriented contact structure $\xi$)
an exact symplectic manifold $(\S_\xi M, \lambda_\xi)$ (that is $\omega_\xi = \d \lambda_\xi$ is a symplectic
form on $\S_\xi M$) diffeomorphic to $\R \times M$. Most of known contact invariants are defined using
symplectizations. For example, the contact homology of $(M, \xi)$ seems to depend only on the
symplectomorphism type of $(\S_\xi M, \omega_\xi)$. Therefore, one might think that if two contact manifolds
have symplectomorphic symplectizations then they are contactomorphic (see \cite{CE2012} p.239 where
the problem is addressed). In this paper, we prove the following theorem which shows that this is not true (see
section 3 for the definition of exact symplectomorphism).

\begin{thm}\label{main}
Let $M$ and $M'$ be closed manifolds of dimension $\geq 5$ such that $\R \times M$ and $\R \times M'$
are diffeomorphic. Then for every contact structure $\xi$ on $M$, there exists a contact structure $\xi'$ on $M'$
such that the symplectizations $\S_\xi M$ and $\S_{\xi'}M'$ are exact symplectomorphic.
\end{thm}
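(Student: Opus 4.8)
The plan is to convert the hypothesis into a statement about $h$-cobordisms and then to realize the required exact symplectomorphism by an Eilenberg-style swindle built from Weinstein cobordisms. First I would record the topological input: since $M$ and $M'$ are closed of dimension $\geq 5$ and $\R\times M$ is diffeomorphic to $\R\times M'$, a classical theorem in differential topology (going back to Mazur and Stallings) shows that $M$ and $M'$ are $h$-cobordant. Fix an $h$-cobordism $W$ from $M$ to $M'$, with torsion $\tau\in\Wh(\pi_1 M)$. Because $\dim W\geq 6$, handle trading lets me choose a handle decomposition of $W$ relative to $M$ using handles of index at most $n$ (where $\dim M=2n-1$), for instance concentrated in the indices $2$ and $3$, which satisfy $2,3\leq n$ exactly because $n\geq 3$. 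This index bound is what a Weinstein structure can accommodate, and it is the reason the dimension hypothesis appears.

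The second step is the geometric realization. Starting from a contact form $\alpha$ with $\ker\alpha=\xi$, I would build a Weinstein cobordism structure $\lambda_1$ on $W$ whose concave boundary is $(M,\xi)$, attaching one Weinstein handle for each smooth handle after isotoping its attaching sphere into isotropic (Legendrian in the critical index $n$) position by the $h$-principle for isotropic embeddings, invoking flexibility in the sense of Cieliebak--Eliashberg (loose Legendrians) for the top-index handles. Write $W_1$ for the resulting Weinstein cobordism; its convex boundary $M'=\partial_+ W$ inherits a contact structure $\xi'$, and this is the structure named in the theorem. I would then construct a second Weinstein cobordism $W_2$ from $(M',\xi')$ back to $(M,\xi)$ whose handles are the Weinstein cancelling partners of the handles of $W_1$, so that both concatenations $W_1\cup W_2$ and $W_2\cup W_1$ arise from a trivial cobordism by inserting cancelling Weinstein handle pairs; in particular $\tau(W_2)=-\tau(W_1)$.

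The heart of the argument is then cancellation together with the swindle. Using Weinstein handle cancellation I would show that $W_1\cup W_2$ is Weinstein-homotopic, rel concave boundary, to the trivial symplectization piece $([0,1]\times M, e^t\alpha)$ over $(M,\xi)$, and symmetrically that $W_2\cup W_1$ is trivial over $(M',\xi')$; smoothly this is guaranteed by the $s$-cobordism theorem since the torsions cancel, and flexibility upgrades it to an exact symplectomorphism of Liouville cobordisms. Now consider the bi-infinite stacking $\cdots W_1 W_2 W_1 W_2\cdots$, a single complete Liouville manifold $(V,\lambda)$. Grouping the pieces as $\cdots(W_1 W_2)(W_1 W_2)\cdots$ exhibits $V$ as an infinite stack of trivial symplectization pieces over $(M,\xi)$, with both ends cylindrical over $(M,\xi)$, hence exact symplectomorphic to $\S_\xi M$; grouping them shifted by one, as $\cdots(W_2 W_1)(W_2 W_1)\cdots$, exhibits the very same $V$ as $\S_{\xi'}M'$. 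Composing the two identifications yields the desired exact symplectomorphism $\S_\xi M\cong\S_{\xi'}M'$.

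The main obstacle, I expect, is the purely symplectic content of the third step: proving that a smoothly trivial composite of cancelling Weinstein handles carries a standard Liouville structure. This is exactly where the Whitehead torsion could in principle survive, and it is the flexibility of Weinstein structures that makes it disappear, mirroring the way the algebraic swindle annihilates $\tau$ even though no single finite cobordism is a product. A secondary technical point is to perform the infinite gluing with enough uniform control that the two groupings define honest exact symplectomorphisms onto the completed symplectizations, rather than merely symplectomorphisms of finite truncations.
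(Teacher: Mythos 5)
Your overall architecture is the same as the paper's: extract an $h$-cobordism $(W;M,M')$ from the diffeomorphism $\R\times M\cong\R\times M'$, take an inverse $h$-cobordism via the $s$-cobordism theorem, put flexible Weinstein structures on both pieces, and run a Mazur swindle on the bi-infinite concatenation. Two of your intermediate claims, however, are stronger than what the flexibility package delivers, and the second one is a genuine gap. First, you cannot build $W_2$ ``from $(M',\xi')$ back to $(M,\xi)$'' with both ends prescribed: the existence theorem for flexible Weinstein structures (Theorem \ref{weinsteinexist}) only lets you prescribe the structure near the \emph{concave} boundary; the contact structure induced on the convex end of $W_2$ is whatever comes out, and the fact that it is contactomorphic to $\xi$ is a \emph{consequence} of the later trivialization (via the holonomy of the new Liouville field), not an input. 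This is fixable by the paper's re-identification trick (Remark \ref{identification}), but it must be tracked.

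The real gap is your central step: ``$W_1\cup W_2$ is Weinstein-homotopic, rel concave boundary, to the trivial symplectization piece \dots\ and flexibility upgrades it to an exact symplectomorphism of Liouville cobordisms.'' What Lemma \ref{cancelling} plus Theorem \ref{weinsteinuniq} actually give is a Weinstein \emph{homotopy} killing the critical points, with $\omega_s$ fixed only near $\partial_-$ and equal to $e^{c_s}\omega_0$ near $\partial_+$; no known result upgrades this to an exact symplectomorphism of compact cobordisms respecting both boundary identifications. Indeed such an upgrade is obstructed: the contact form on the convex end is rescaled by $e^{c_1}$ during the homotopy, and even after the critical points are gone, the identification of the trivialized cobordism with $([0,1]\times M, e^t\alpha)$ is twisted by the holonomy contactomorphism of $X''_1$, which need not be isotopic to the identity. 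If the rel-boundary statement were available, the swindle would indeed finish immediately --- but it is precisely to avoid it that the paper performs the Weinstein homotopy \emph{periodically on the infinite manifold} $V$ (after rescaling the $j$-th copy by $e^{jk}$ and re-identifying via holonomy) and only then converts homotopy into symplectomorphism by a Moser argument run on all of $V$, using completeness of $X_s$ and the disjoint hypersurfaces $\tilde{M^j}=\varphi_{X_0}^{jC}(M^j)$ with $C>\max c_s$ to produce an isotopy $\Psi_s$ with $\Psi_s^*\lambda_s-\lambda_0$ exact; the flow of the nonvanishing $X_1$ then identifies $V$ with $\S_\xi M$, and the shifted grouping gives $\S_{\xi'}M'$. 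In short, what you set aside as a ``secondary technical point'' about uniform control in the infinite gluing is where the paper's real symplectic work happens, exactly because the finite rel-boundary triviality you assert cannot be established directly.
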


As a concrete example, consider $M = L(7,1) \times S^{2n}$ and $M' = L(7,2) \times S^{2n}$ for $n \geq 1$,
where $L(p,q)$ denotes the three-dimensional lens space of type $(p,q)$. In \cite{MR0133127}, J. Milnor
showed using Reidemeister torsion that $M$ and $M'$ are not diffeomorphic, but proved however that
they are $h$-cobordant. The $s$-cobordism theorem then implies that $\R \times M$ and $\R \times M'$
are diffeomorphic (see section 2). On the other hand $M$ admits a contact structure $\xi$. Indeed, for $n=1$,
$M$ is diffeomorphic to the unit tangent bundle of $L(7,1)$ and in general, $M$ is the boundary of
$L(7,1) \times D^{2n+1}$ which is a Weinstein domain by Y. Eliashberg's work (see section 3).
Theorem \ref{main} above then provides a contact structure $\xi'$ on $M'$ such that $\S_\xi M$ and $\S_{\xi'} M'$ are exact
symplectomorphic, though $M$ and $M'$ are not even diffeomorphic.

The main ingredients in the proof are the flexibility properties of certain Weinstein cobordisms,
first discovered by Y. Eliashberg (\cite{MR1461569}) and developped with K. Cieliebak (\cite{CE2012})
on the base of E. Murphy's work (\cite{M2012a}).

This paper is organized as follows. Section 2 contains some recollections about Morse-Smale theory and the
$s$-cobordism theorem. In section 3, we discuss symplectization of contact manifolds, Weinstein
cobordisms, and quote two theorems from \cite{CE2012} about so-called
\emph{flexible} Weinstein cobordisms. Section 4 contains our results and section 5 discusses
a few open questions.

\bigskip

\noindent{\bf{Acknowledgements.}} I am very grateful to Emmanuel Giroux for his help and
support during this work. I also thank Marco Mazzucchelli for proofreading a first draft
of this paper.
\section{$h$-cobordisms}

Since we look for contact manifolds with symplectomorphic symplectizations, we must
first tackle the following problem from differential topology :
\emph{if $M$ and $M'$ are closed oriented manifolds, when does $\R \times M$ and
$\R \times M'$ are diffeomorphic ?}
If $M$ and $M'$ are $3$-dimensional, there are no known examples where $M$ and $M'$ are
not diffeomorphic (see the last question in section 5). However, in dimension $\geq 5$, there
are examples where $M$ and $M'$ are not diffeomorphic (see the example in the introduction).

Let us introduce some terminology. A \emph{cobordism from} $M$ \emph{to} $M'$ is a triple
$(W; M, M')$ where $W$ is a compact oriented manifold together with a decomposition of its
boundary as $\pd W = \pd_+ W \sqcup \pd_- W$ and orientation-preserving diffeomorphisms
$\pd_- W \to -M$ and $\pd_+ W \to M'$. Here, as customary, $\pd W$ is oriented with outer normal first
convention and $-M$ means $M$ with opposite orientation. We insist that the identification of
the boundary is part of the data (as in \cite{MR0190942}). Given two cobordisms $(W; M, M')$
and $(W'; M', M'')$, we can \emph{compose} them by gluing along $M'$ and get another cobordism
denoted by $(W \odot W'; M, M'')$. Producing an actual smooth structure on $W \odot W'$ requires
some choices but the result is independent of these choices up to a diffeomorphism relative to
the boundary. A cobordism $(W; M, M')$ is called an \emph{$h$-cobordism} if both inclusion maps
$M \to W$ and $M' \to W$ are homotopy equivalences. A \emph{product cobordism}
$([0,1] \times M; M, M)$ is an obvious example. A \emph{Morse function}
on a cobordism $(W; M, M')$ is a smooth function $\phi : W \to \R$ which is constant on the boundary,
satisfies $\d \phi > 0$ on inward pointing vectors at $M$ and outward pointing vectors at $M'$,
and whose critical points are non-degenerate. A \emph{pseudo-gradient vector field} for a Morse
function $\phi$ is a vector field $X$ such that $X . \phi > 0$ outside of the critical points of $\phi$
and such that at each critical point $p$, the linearized vector field $X_p^{lin}$ has no
eigenvalue with vanishing real part. We call $(X, \phi)$ a \emph{Morse pair}. A \emph{Morse
homotopy} is a smooth path $(X_s, \phi_s)$ which is generic in the sense that it encounters only
birth-death type singularities. There are finitely many parameters $s$ where $\phi_s$ has a degenerate
critical points, for any other parameter $s$, $(X_s, \phi_s)$ is a Morse pair. S. Smale showed
in \cite{MR0153022} that simply-connected $h$-cobordisms of dimension $\geq 6$ are
diffeomorphic to product cobordisms. The non-simply connected case is the subject of the
$s$-cobordism theorem, proved by D. Barden, B. Mazur and J. Stallings, which provides a
complete classification of $h$-cobordisms $(W; M, -)$ up to diffeomorphism relative to $M$
in terms of so-called \emph{Whitehead torsion}. These theorems are proved using what is
now called \emph{Morse-Smale theory}. This consist in simplifying Morse pairs by cancelling
critical points. For example, if we are able to cancel all the critical points of a Morse function
on a cobordism, the latter must be diffeomorphic to a product cobordism.

Here are two lemmas from Morse-Smale theory which are building blocks for the proof
of the $s$-cobordism theorem (see \cite{MR0189048}). We will use them in section 4.

\begin{lem}[Normal form]\label{normalform}
Let $(W; M, M')$ be an $h$-cobordism of dimension $\geq 6$. Then there is a Morse pair
with only critical points of index $2$ and $3$.
\end{lem}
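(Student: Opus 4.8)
The plan is to begin with an arbitrary Morse pair on $(W; M, M')$ and simplify it through the standard Morse-Smale operations — rearrangement, pairwise cancellation of critical points, and handle trading — until only indices $2$ and $3$ survive. First I would choose any Morse pair $(X, \phi)$ (such pairs exist and are generic) and put it in self-indexing form: by the rearrangement lemma the critical points may be ordered so that the critical value equals the index, which lets me treat the handles of each index on separate levels and localize all subsequent moves.

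Next I would eliminate the extreme handles. Since $W$ is connected and $\partial_- W = M$ is nonempty, each index-$0$ critical point can be cancelled against an index-$1$ critical point, connectivity providing a single transverse gradient trajectory between them; this removes all index-$0$ handles. The remaining index-$1$ handles I would then \emph{trade} for index-$3$ handles: introduce a cancelling pair of index $2$ and $3$, and cancel the original index-$1$ handle against the newly born index-$2$ handle. Running the identical argument for $-\phi$ on the reversed cobordism $(W; M', M)$ disposes of the handles of index $n$ and $n-1$. After these steps only critical points of indices $2, \dots, n-2$ remain.

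It then remains to compress this range down to $\{2,3\}$, and here the $h$-cobordism hypothesis enters decisively. The based chain complex $C_*$ associated to the Morse pair — a finite free complex of $\Z[\pi_1 W]$-modules generated by the critical points, with boundary counting gradient trajectories — is acyclic because $M \hookrightarrow W$ is a homotopy equivalence, so $H_*(W, M) = 0$. Combining this algebraic acyclicity with handle sliding and trading (which realize changes of basis and the introduction or cancellation of adjacent handle pairs), I would trade the high-index handles downward and cancel them in adjacent pairs, lowering the maximal index step by step until every handle has index $2$ or $3$.

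The main obstacle throughout is the realization of these algebraic cancellations by \emph{genuine} geometric handle cancellations, that is, the Whitney trick. Two handles of consecutive indices cancel only when the relevant belt and attaching spheres meet in a single transverse point, whereas a priori they meet with the correct algebraic intersection number but possibly in many geometric points. The Whitney trick removes superfluous intersections in pairs, and it is exactly here that both hypotheses are indispensable: the bound $\dim W \geq 6$ supplies enough room to embed the Whitney disks, while the $h$-cobordism condition guarantees, through the vanishing of $\pi_1(W, M)$ and $\pi_1(W, M')$, that the fundamental-group obstruction to the Whitney trick disappears, so that an algebraic intersection number $\pm 1$ can indeed be achieved geometrically.
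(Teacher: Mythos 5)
Your overall scheme is the standard one: the paper does not prove this lemma but quotes it from the literature (\cite{MR0189048}), and the proof there proceeds exactly as you outline --- self-indexing, cancellation of $0$- and $n$-handles by connectivity, trading of $1$- and $(n-1)$-handles, and then compression of the middle indices using acyclicity of the $\Z[\pi_1 W]$-chain complex of the universal cover together with handle sliding and trading. Up to that point your proposal is a faithful sketch.

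The one genuine defect is your justification of the Whitney trick. You claim that the $h$-cobordism condition, ``through the vanishing of $\pi_1(W,M)$ and $\pi_1(W,M')$,'' makes the fundamental-group obstruction to the Whitney trick disappear, so that algebraic intersection number $\pm 1$ can always be realized geometrically. This is wrong as stated, and it matters: the Whitney disk lives in a level set of $\phi$, whose fundamental group is isomorphic to $\pi_1 M$ (via the $h$-cobordism condition) and can be arbitrary --- in the paper's motivating example it is $\Z/7\Z$. Vanishing of the \emph{relative} groups $\pi_1(W,M)$ and $\pi_1(W,M')$ does not make the level sets simply connected and does not remove the $\pi_1$-obstruction. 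The correct statement is that intersection points must be counted in $\Z[\pi_1]$: two excess points can be Whitney-cancelled precisely when they carry opposite signs \emph{and the same group element}, for then the Whitney circle is null-homotopic in the level set, and codimension $\geq 3$ of the spheres together with the level set having dimension $\geq 5$ allows one to contract it in the complement and embed the disk. If intersection number $\pm 1$ over $\Z$ sufficed, your argument would prove too much: one could cancel \emph{all} handles of any $h$-cobordism, contradicting the existence of $h$-cobordisms with nonvanishing Whitehead torsion --- that is, Lemma \ref{cancelling} would not need its torsion hypothesis, and the $s$-cobordism theorem would collapse to the $h$-cobordism theorem. Your proof survives because in the normal-form argument every cancellation is of a pair introduced with $\Z[\pi_1]$-intersection number $\pm 1$ by construction (the cancelling pairs born in the trading moves); the honest role of $\pi_1(W,M) = \pi_1(W,M') = 0$ is elsewhere, namely in the $1$-handle and dual $(n-1)$-handle trades, where one needs attaching circles to be null-homotopic so they bound embedded $2$-disks.
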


We briefly indicate why it is not always possible to cancel the remaining critical points
(see \cite{MR0189048} for more details). Take a Morse pair $(X, \phi)$ given by lemma
\ref{normalform} and lift it to a Morse pair $(\tilde X, \tilde \phi)$ on a universal cover
$\tilde M \to M$. The Morse complex $(C_i, \pd_i)$ associated to $(\tilde X, \tilde \phi)$
is a chain complex over $\Z[\pi_1 M]$ which is only non-zero in degree $2$ and $3$.
Moreover, since $W$ is an $h$-cobordism, this complex is acyclic. Therefore we get a matrix
$A \in \GL(\Z[\pi_1 M])$ which represents the boundary operator $\pd_3 : C_3 \to C_2$.
It turns out that the class of $A$ in a quotient group $\Wh(\pi_1 M)$ of $\GL(\Z[\pi_1 M])$,
called the \emph{Whitehead group} of $\pi_1 M$ is an actual invariant of the $h$-cobordism,
called \emph{Whitehead torsion}. The remaining critical points can be cancelled if and only
if the Whitehead torsion vanishes.

\begin{lem}\label{cancelling}
Let $(W; M, M')$ be an $h$-cobordism of dimension $\geq 6$ with vanishing Whitehead torsion.
Let $(X, \phi)$ be a Morse pair with only critical points of index $2$ and $3$. Then there is a
Morse homotopy $(X_s, \phi_s)$ fixed near the boundary, such that $(X_0, \phi_0) = (X, \phi)$
and $(X_1, \phi_1)$ has no critical points.
\end{lem}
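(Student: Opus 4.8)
The plan is to reduce the statement to an algebraic problem about the boundary matrix $A \in \GL(\Z[\pi_1 M])$ and then realize the algebraic simplifications by explicit Morse homotopies. Write $n = \dim W \geq 6$ and lift $(X, \phi)$ to a Morse pair on the universal cover to obtain the Morse complex $(C_i, \partial_i)$ over $\Z[\pi_1 M]$, which is concentrated in degrees $2$ and $3$. Since $W$ is an $h$-cobordism this complex is acyclic, so $\partial_3 \colon C_3 \to C_2$ is an isomorphism, represented by an invertible matrix $A$, whose class vanishes in $\Wh(\pi_1 M)$ by hypothesis. I would first recall that each entry $A_{ij}$ counts, with signs and group elements recording the homotopy class of the connecting trajectory, the gradient flow lines from the $j$-th index-$3$ point to the $i$-th index-$2$ point; equivalently it records the algebraic intersection of the descending sphere $S^2$ of the former with the ascending sphere $S^{n-3}$ of the latter inside a regular level set $V^{n-1}$.

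Vanishing of the Whitehead torsion means that, after stabilization, $A$ is a product of elementary matrices and diagonal matrices with entries $\pm g$, $g \in \pi_1 M$. I would translate each of these algebraic moves into a Morse homotopy fixed near the boundary: (i) stabilizing $A$ by an identity block is realized by the birth of a cancelling pair of critical points of index $2$ and $3$, that is, a single birth-death singularity; (ii) an elementary operation, adding a multiple of one column or row to another, is realized by a handle slide, sliding the descending manifold of one index-$3$ point across another along a path prescribed by the group element, an isotopy of $X$ through pseudo-gradients that creates no new critical points; (iii) multiplication by a unit $\pm g$ is realized by reorienting a descending manifold and changing the chosen path to the basepoint. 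Composing these homotopies brings the matrix to the identity, so afterwards each index-$3$ point $p_k$ is matched with an index-$2$ point $q_k$ whose spheres meet with algebraic intersection exactly $1$, and which have no algebraic incidence with the remaining critical points.

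It then remains to upgrade ``algebraic intersection $1$'' to ``geometric intersection one transverse point'', for then the standard Morse cancellation lemma provides a local birth-death homotopy deleting the pair $(p_k, q_k)$, and iterating over $k$ removes all critical points. This upgrade is the Whitney trick performed in the level set $V^{n-1}$: given two excess intersection points of opposite sign and equal $\pi_1$-class, I would join them by arcs in $S^2$ and $S^{n-3}$ forming a null-homotopic loop, span it by an embedded Whitney disk, and use that disk to isotope the spheres so as to cancel the pair. I expect this to be the main obstacle, and the place where the hypothesis $n \geq 6$ is indispensable: the Whitney disk is two-dimensional while the ascending sphere $S^{n-3}$ has codimension $2$ in $V$, so embedding the disk and clearing its interior off the spheres requires $\dim V = n - 1 \geq 5$, together with control on $\pi_1$ of the complement, which the reduction $A = \id$ makes available. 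Once the Whitney disk is in place the cancellation is geometric, and the proof concludes by iterating over all the pairs.
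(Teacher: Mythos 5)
Your proposal is correct and follows essentially the same route as the paper, which in fact offers no proof of this lemma at all but quotes it as a standard building block of the $s$-cobordism theorem (citing \cite{MR0189048}); your sketch---stabilization realized by birth singularities, elementary operations by handle slides, units $\pm g$ by reorientations and changes of reference paths, then the Whitney trick in the middle level set followed by cancellation via death singularities---is precisely the argument found in that literature. One imprecision worth noting: the $\pi_1$-control needed to place the Whitney disk in the complement of the codimension-two ascending spheres does not come from the reduction $A = \id$, but from the $h$-cobordism hypothesis, which makes the attaching circles of the index-$2$ handles null-homotopic in $M$, so that flowing the complement of the ascending spheres down to the bottom level identifies $\pi_1(V \setminus \bigcup S_A)$ with $\pi_1(M) \cong \pi_1(V)$, as required by Whitney's lemma in the case of a $2$-dimensional sphere.
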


We now state the $s$-cobordism theorem.

\begin{thm}[Barden, Mazur, Stallings; 1965]
Let $M$ be a closed oriented manifold of dimension $\geq 5$.
Whitehead torsion $\tau(W, M) \in \Wh(\pi_1 M)$ of a cobordism $W$ from $M$ induces a
bijective correspondence:
\[\tau : \{\text{$h$-cobordisms $(W; M, -)$ up to diffeomorphism relative to $M$}\} \to \Wh(\pi_1 M)\]
\end{thm}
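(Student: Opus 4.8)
The plan is to check that $\tau$ is well-defined on diffeomorphism-rel-$M$ classes, to isolate the rigidity statement that $\tau(W,M)=0$ forces $W$ to be a product (this is exactly where Lemmas \ref{normalform} and \ref{cancelling} enter), and then to bootstrap from rigidity to both surjectivity and injectivity of $\tau$. For well-definedness I would argue that the Whitehead class of the boundary matrix $A$ is independent of the chosen Morse pair: any two Morse pairs are joined by a Morse homotopy, and the birth-death moves and handle slides encountered along such a homotopy alter $A$ only by elementary operations, stabilizations, and multiplications by trivial units $\pm g$ with $g \in \pi_1 M$, all of which act trivially on $\Wh(\pi_1 M)$. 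Since a diffeomorphism relative to $M$ carries Morse pairs to Morse pairs, $\tau$ descends to the set of diffeomorphism-rel-$M$ classes.

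For the rigidity step, given an $h$-cobordism with $\tau(W,M)=0$, Lemma \ref{normalform} supplies a Morse pair with critical points only in index $2$ and $3$, and Lemma \ref{cancelling} then produces a Morse homotopy, fixed near the boundary, ending at a Morse pair $(X_1,\phi_1)$ with no critical points at all. The flow of a pseudo-gradient for $\phi_1$ identifies $W$ with $[0,1]\times M$ by a diffeomorphism that is the identity on $M=\pd_- W$; hence $W$ is a product cobordism relative to $M$, and $\tau^{-1}(0)$ is exactly the class of the product.

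Surjectivity I would obtain by a handle construction. Given $\tau\in\Wh(\pi_1 M)$, represent it by a matrix $A\in\GL_k(\Z[\pi_1 M])$. Attach $k$ trivial $2$-handles to $M\times\{1\}$ in $M\times[0,1]$, and then attach $k$ $3$-handles whose attaching spheres, read in the middle level, realize the classes prescribed by the columns of $A$. Because $\dim W\geq 6$, general position and the Whitney trick let us realize these by disjoint embedded spheres with exactly the prescribed intersection data, so the resulting cobordism has a Morse complex over $\Z[\pi_1 M]$ with boundary operator $A$. As $A$ is invertible this complex is acyclic, and since all handles have index $\geq 2$ the inclusions of both ends are $\pi_1$-isomorphisms; Hurewicz and Whitehead then show $W$ is an $h$-cobordism with $\tau(W,M)=[A]=\tau$.

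Injectivity rests on a sum formula: for composable $h$-cobordisms one has $\tau(W\odot W',M)=\tau(W,M)+\iota_*\tau(W',M')$, where $\iota_*$ is the isomorphism $\Wh(\pi_1 M')\to\Wh(\pi_1 M)$ induced by $M'\hookrightarrow W$. Using surjectivity over $M'$ I would first construct an inverse, namely an $h$-cobordism $W':M'\to M$ whose torsion is chosen so that $\tau(W\odot W',M)=0$; by rigidity $W\odot W'\cong[0,1]\times M$ relative to $M$. Then, if $W:M\to M'$ and $V:M\to M''$ satisfy $\tau(W,M)=\tau(V,M)$, the sum formula gives $\tau(W'\odot V,M')=0$, so $W'\odot V\cong[0,1]\times M'$ relative to $M'$; composing, $W\cong W\odot(W'\odot V)\cong(W\odot W')\odot V\cong V$ relative to $M$, which is the desired injectivity. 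The hard part is concentrated in the rigidity step, that is in Lemma \ref{cancelling}, whose proof rests on the Whitney trick and hence on $\dim\geq 6$, together with the verification that the algebraic moves along a Morse homotopy leave the Whitehead class unchanged; the embedded-sphere realization in the surjectivity step is the other place where the dimension hypothesis is indispensable.
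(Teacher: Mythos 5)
The paper does not actually prove this theorem: it is quoted as a classical result of Barden, Mazur and Stallings, with the reader referred to Kervaire, Milnor and L\"uck for proofs, so there is no internal proof to compare against. Your outline is a faithful reconstruction of the standard argument, and it meshes with every fragment the paper does present: Lemmas \ref{normalform} and \ref{cancelling} are exactly the two rigidity ingredients you invoke (normal form in indices $2,3$, then cancellation when $\tau = 0$, with the product structure rel $M$ obtained from the flow of the critical-point-free pseudo-gradient); the discussion following Lemma \ref{normalform} --- the Morse complex over $\Z[\pi_1 M]$, the matrix $A \in \GL(\Z[\pi_1 M])$, and the claim that its class in $\Wh(\pi_1 M)$ is an invariant --- is your well-definedness step; and Corollary \ref{invertibility}, which the paper justifies by saying the inverse cobordism is ``essentially the $h$-cobordism with opposite Whitehead torsion,'' is precisely your injectivity mechanism via the composition formula $\tau(W \odot W', M) = \tau(W, M) + \iota_* \tau(W', M')$ together with realization (surjectivity) over $M'$.

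Two caveats on where your sketch compresses real work, neither of which is a gap in substance. First, well-definedness is more than ``Morse homotopies exist'': along a generic path the complex need not stay concentrated in degrees $2$ and $3$, so one should define $\tau(W,M)$ as the torsion of the acyclic based $\Z[\pi_1 M]$-complex of the universal cover in arbitrary degrees and verify invariance under stabilization, handle slides (elementary matrices), and changes of lifts and orientations (units $\pm g$) --- which is what you assert, but the reduction to these moves is the content of the invariance proof. Second, in the surjectivity step the Whitney trick is not what embeds the attaching $2$-spheres of the $3$-handles: general position already does, since twice their dimension is less than the dimension of the middle level ($\geq 5$); what matters is that their $\Z[\pi_1]$-valued intersection data with the belt spheres realizes $A$, and the Whitney trick is reserved for the cancellation step (Lemma \ref{cancelling}), exactly as you say at the end. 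With those glosses, your proposal is a correct proof sketch of the quoted theorem, following the same classical route the paper's citations point to.
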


The reader may consult \cite{MR0189048, MR0196736, MR2061749} for more information
about Whitehead torsion and the $s$-cobordism theorem. We do not go further in this topic
since we will only need the following corollary:

\begin{cor}\label{invertibility}
For any $h$-cobordism $(W; M, M')$ of dimension $\geq 6$, there is an $h$-cobordism
$(W'; M', M)$ such that $W \odot W'$ is diffeomorphic to $[0,1] \times M$ and
$W' \odot W$ is diffeomorphic to $[0,1] \times M'$.
\end{cor}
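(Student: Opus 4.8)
The plan is to reduce the statement to the group structure of the Whitehead group, using only the $s$-cobordism theorem together with the two facts furnished above: that Whitehead torsion is additive under composition of $h$-cobordisms, and that an $h$-cobordism with vanishing torsion is a product cobordism (Lemmas \ref{normalform} and \ref{cancelling}). Throughout I would use that a composite of $h$-cobordisms is again an $h$-cobordism, which follows from van Kampen and a Mayer--Vietoris argument.

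First I would fix canonical identifications of the relevant Whitehead groups. Since the inclusions $M \hookrightarrow W$ and $M' \hookrightarrow W$ are homotopy equivalences, they induce isomorphisms $\pi_1 M \cong \pi_1 W \cong \pi_1 M'$, hence a canonical isomorphism $\Wh(\pi_1 M) \cong \Wh(\pi_1 M')$. Using such identifications for every $h$-cobordism in sight, all the torsions below may be regarded as elements of the single abelian group $\Wh(\pi_1 M)$, and the composition formula for Whitehead torsion takes the clean additive form $\tau(W \odot W') = \tau(W) + \tau(W')$.

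Next, writing $a = \tau(W, M) \in \Wh(\pi_1 M)$, I would invoke the surjectivity part of the $s$-cobordism theorem with base $M'$ to produce an $h$-cobordism $(W'; M', M'')$ whose torsion equals $-a$ (transported to $\Wh(\pi_1 M')$ by the identification above). The additivity formula then yields $\tau(W \odot W') = a + (-a) = 0$, so by Lemmas \ref{normalform} and \ref{cancelling} the composite $W \odot W'$ admits a Morse function without critical points and is therefore diffeomorphic, relative to $M$, to $[0,1] \times M$. In particular its upper boundary $M''$ is diffeomorphic to $M$; fixing such a diffeomorphism lets me regard $(W'; M', M)$ as a cobordism from $M'$ back to $M$.

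Finally, the composite $W' \odot W$ is an $h$-cobordism from $M'$ to $M'$, and additivity together with commutativity of $\Wh(\pi_1 M)$ gives $\tau(W' \odot W) = -a + a = 0$, so it too is a product, diffeomorphic to $[0,1] \times M'$ relative to $M'$. The main obstacle is not geometric but bookkeeping: one must pin down the canonical identifications of the various Whitehead groups and the precise form of the composition formula, after which the group structure of $\Wh(\pi_1 M)$ does all the work, the sought-for inverse being exactly the $h$-cobordism realizing $-\tau(W)$.
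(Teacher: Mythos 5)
Your proposal is correct and follows essentially the same route as the paper, which justifies the corollary in two sentences by appealing to the $s$-cobordism classification and taking for $W'$ the $h$-cobordism realizing the opposite Whitehead torsion (citing Milnor); you have simply fleshed out that sketch, including the fundamental-group identifications, the additivity $\tau(W \odot W') = \tau(W) + \tau(W')$, and the reduction of the vanishing-torsion case to Lemmas \ref{normalform} and \ref{cancelling}. The bookkeeping you flag (coherence of the isomorphisms $\Wh(\pi_1 M) \cong \Wh(\pi_1 M')$ induced by $W$ and by $W'$ after identifying $M''$ with $M$ via the product structure) does work out, so there is no gap.
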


The reason is that, according to the $s$-cobordism theorem, $h$-cobordisms are classified
by Whitehead torsion which takes value in an abelian group. The "inverse" $h$-cobordism
$W'$ in corollary \ref{invertibility} is essentially the $h$-cobordism with opposite Whitehead
torsion (see \cite{MR0196736}).

In particular, this implies that (oriented) $h$-cobordism between closed oriented manifolds
of dimension $\geq 5$ defines an equivalence relation (the symmetry property was not obvious).

Let $\Psi : \R \times M \to \R \times M'$ be a diffeomorphism. Consider in $\R \times M'$ the
regions between $\{c'\} \times M'$ and $\Psi(\{c\} \times M)$, and between $\{c'\} \times M'$
and $\Psi(\{-c\} \times M)$ for $c$ sufficiently large. These are cobordisms inverse to each
other, so in particular $h$-cobordisms. Conversely, we have the following well-known
corollary of the $s$-cobordism theorem.

\begin{cor}\label{mazurtrick}
Let $M$ and $M'$ be closed oriented manifolds of dimension $\geq 5$. If $M$ and $M'$ are $h$-cobordant, then
$\R \times M$  and $\R \times M'$ are diffeomorphic.
\end{cor}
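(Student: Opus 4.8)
The plan is to run Mazur's infinite swindle, using the invertibility of $h$-cobordisms supplied by Corollary \ref{invertibility}. Since $M$ and $M'$ are $h$-cobordant I fix an $h$-cobordism $(W; M, M')$; as $\dim M \geq 5$ its dimension is $\geq 6$, so Corollary \ref{invertibility} applies and produces an inverse $h$-cobordism $(W'; M', M)$ together with diffeomorphisms, compatible with the boundary identifications, $W \odot W' \cong [0,1] \times M$ and $W' \odot W \cong [0,1] \times M'$.

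The key construction is a bi-infinite telescope. I glue a $\Z$-indexed family $(W_n)_{n \in \Z}$, with $W_n = W$ for $n$ even and $W_n = W'$ for $n$ odd, by identifying $\pd_+ W_n$ with $\pd_- W_{n+1}$ via the given boundary diffeomorphisms; these match because $\pd_+ W = M' = \pd_- W'$ and $\pd_+ W' = M = \pd_- W$. After choosing a collar at each gluing locus, this assembles into a single smooth manifold $P$ without boundary, well defined up to diffeomorphism.

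The conclusion then comes from reading off two product structures on $P$ by regrouping the factors in two ways. Grouping the blocks $(W_{2k}, W_{2k+1}) = (W, W')$ for $k \in \Z$, each block is diffeomorphic to $[0,1] \times M$ relative to its boundary, and the concatenation of countably many such product cobordisms along $M$ is diffeomorphic to $\R \times M$; hence $P \cong \R \times M$. Grouping instead $(W_{2k+1}, W_{2k+2}) = (W', W)$, each block is diffeomorphic to $[0,1] \times M'$, and the identical argument gives $P \cong \R \times M'$. Both regroupings exhaust $P$, so $\R \times M \cong P \cong \R \times M'$, as claimed.

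The main technical obstacle is making the infinite gluing rigorous: one must verify that the collar choices fit together into a genuine smooth structure on $P$, independent of the choices up to diffeomorphism, and that an infinite end-to-end concatenation of product cobordisms $[0,1] \times M$ is diffeomorphic to $\R \times M$. The subtle point here is that the gluing diffeomorphisms of $M$ need not be isotopic to the identity, but each can be absorbed into an ambient diffeomorphism supported in a single collar, so the telescope remains a product. Once this bookkeeping is settled, the swindle closes at once.
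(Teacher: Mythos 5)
Your proposal is correct and is exactly the paper's argument: the Mazur trick, invoking Corollary \ref{invertibility} to get the inverse cobordism $W'$, forming the bi-infinite alternating composition, and reparenthesizing it in two ways to obtain $\R \times M \simeq \R \times M'$. If anything, you are more careful than the paper's deliberately terse one-line summary, since you explicitly note that the collar choices and gluing diffeomorphisms must be absorbed to make the infinite concatenation a genuine product.
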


\begin{proof}
The proof is an instance of the so-called \emph{Mazur trick} which consists in introducing
parentheses in an infinite sum in two different ways.

By corollary \ref{invertibility}, there are $h$-cobordisms $(W; M, M')$ and $(W'; M', M)$
such that:
\[W \odot W' \simeq [0, 1] \times M \text{ and } W' \odot W \simeq [0, 1] \times M'.\]

We now consider the open manifold $V$ obtained by gluing infinitely many copies
of $W$ and $W'$ in an alternate pattern:
\[V = \cdots \odot W' \odot W \odot W' \odot W \odot \cdots \]

The proof can be sumed up formally in one line:

\[\R \times M \simeq \bigodot_{j \in \Z} [j, j+1] \times M \simeq \bigodot_{j \in \Z}(W \odot W')  \simeq V
\simeq \bigodot_{j \in \Z} (W' \odot W) \simeq \bigodot_{j \in \Z} [j, j+1] \times M' \simeq \R \times M'\]
\end{proof}

We finish this section by studying the extension problem of non-degenerate $2$-forms on
$h$-cobordisms. 

\begin{rem}\label{hlemprod}
In the case of a product cobordism $W = [0, 1] \times M$, we can retract $W$
by an isotopy to $[0, \epsilon] \times M$ with $\epsilon > 0$ as small as we want.
Therefore we can extend any non-generate $2$-form defined near $\{0\} \times M$ to a
non-degenerate $2$-form on $W$ in a unique way up to homotopy relative to a
neighbourhood of $\{0\} \times M$.
\end{rem}

This is also true for $h$-cobordisms of dimension $\geq 6$ according to the
following lemma.

\begin{lem}\label{hlem}
Let $(W; M, M')$ be an $h$-cobordism of dimension $\geq 6$ with a
non-degenerate $2$-form $\eta$ defined near $M$. There is a non-degenerate
two-form $\omega$ on $W$ that coincides with $\eta$ near $M$. Moreover, 
the extension is unique up to homotopy relative to a neighbourhood of $M$.
\end{lem}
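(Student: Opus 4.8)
The plan is to treat this as a purely homotopy-theoretic section-extension problem and to kill all obstructions using the fact that $(W, M)$ has trivial relative cohomology. Observe first that a non-degenerate $2$-form on $W$ is nothing but a section of the bundle $p \colon E \to W$ whose fibre over $x$ is the open subset of $\Lambda^2 T_x^* W$ consisting of non-degenerate alternating forms. The given datum $\eta$ is then a section of $E$ over a neighbourhood $U$ of $M$, and we must extend it over all of $W$ and show that any two such extensions are homotopic through sections, relative to $U$. It is worth stressing that we are only asked for non-degeneracy, not closedness, so there is no partial differential equation to solve: the whole question is one of obstruction theory.

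First I would pin down the fibre. Since $\GL(2n, \R)$ acts transitively on the non-degenerate $2$-forms inducing a fixed orientation, with stabiliser $Sp(2n, \R)$, the relevant component of the fibre is $\GL^+(2n, \R)/Sp(2n, \R)$, which is homotopy equivalent to $SO(2n)/U(n)$. I would restrict $E$ to the component of the fibre containing $\eta$, which is well defined over the connected manifold $W$. The homotopy groups of $F = SO(2n)/U(n)$ show that it is connected and simply connected, so obstruction theory may be run with ordinary coefficients and without any $\pi_0$- or $\pi_1$-subtleties.

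Next I would set up the obstruction theory on the relative CW pair $(W, M)$. By Lemma \ref{normalform} there is a Morse pair with critical points of index $2$ and $3$ only, and the associated handle decomposition exhibits $(W, M)$ as a relative CW complex with cells in dimensions $2$ and $3$ alone. The successive obstructions to extending the section from $U$ over the $(k+1)$-skeleton lie in $H^{k+1}(W, M; \pi_k F)$, while the difference obstructions governing uniqueness up to homotopy rel $U$ lie in $H^{k}(W, M; \pi_k F)$. The crucial point — and exactly where the $h$-cobordism hypothesis enters — is that the inclusion $M \hookrightarrow W$ is a homotopy equivalence, so $H_*(W, M; \Z) = 0$ and hence, by the universal coefficient theorem, $H^*(W, M; A) = 0$ for every coefficient group $A$. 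Therefore every obstruction group vanishes, yielding both the existence of the extension $\omega$ and its uniqueness up to homotopy relative to a neighbourhood of $M$.

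The main obstacle is really conceptual: recognising that \emph{non-degeneracy is a flexible, homotopy-theoretic condition}, so that the problem dissolves into obstruction theory, and then locating precisely where the hypotheses bite — the $h$-cobordism property forcing $H^*(W, M) = 0$, and Lemma \ref{normalform} (which needs dimension $\geq 6$) presenting $(W, M)$ with cells only in dimensions $2$ and $3$. I would also note that, because every relative cohomology group vanishes, the conclusion is in fact insensitive to the precise connectivity of $F$; the simple-connectedness of $F$ is a convenience that lets us avoid local coefficient systems rather than a genuine necessity. In particular, no appeal to Lemma \ref{cancelling} — and hence to the vanishing of the Whitehead torsion — is needed, which is essential since that torsion is not assumed to vanish here.
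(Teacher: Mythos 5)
Your argument is correct in substance but takes a genuinely different route from the paper. The paper never computes the homotopy type of the space of non-degenerate $2$-forms: it invokes Corollary \ref{invertibility} to produce an inverse $h$-cobordism $(W'; M', M)$ with $W \odot W'$ diffeomorphic to $[0,1] \times M$, extends $\eta$ over this product cobordism by the retraction argument of Remark \ref{hlemprod}, and restricts back to $W$; uniqueness is obtained the same way, by extending two given forms further across $W'$ and comparing them on the product, again via Remark \ref{hlemprod}. That route is soft (no obstruction theory, no identification of the fibre) but expensive: it rests on the $s$-cobordism theorem and therefore genuinely uses $\dim W \geq 6$. Your obstruction-theoretic proof uses only that $M \hookrightarrow W$ is a homotopy equivalence, so it is more elementary in its inputs and in fact works in any dimension; your observation that Lemma \ref{cancelling} and the vanishing of Whitehead torsion play no role is exactly right, since acyclicity of the relative complex suffices for cohomological vanishing even when the torsion is nonzero. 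Relatedly, your appeal to Lemma \ref{normalform} is superfluous: any handle decomposition presents $(W, M)$ as a relative CW pair, and since all the obstruction groups vanish it is irrelevant in which dimensions the cells lie, so the hypothesis $\dim \geq 6$ is an artifact of the paper's route, not of yours.

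One imprecision is worth repairing. Simple connectivity of the fibre $F = SO(2n)/U(n)$ guarantees that the local system $\{\pi_k(F_x)\}$ is well defined, but not that it is untwisted: the monodromy of the bundle $E$, through the action of $\pi_1(W)$, may act nontrivially on $\pi_k(F)$, and the universal coefficient theorem applied to $H_*(W, M; \Z) = 0$ kills only cohomology with constant coefficients. The fix is standard and keeps your proof intact: since $M \to W$ is a homotopy equivalence, the relative chain complex $C_*(\tilde W, \tilde M)$ of universal covers is an acyclic complex of free $\Z[\pi_1 W]$-modules, hence chain contractible, so $H^*(W, M; \mathcal{L}) = 0$ for every local coefficient system $\mathcal{L}$; with this stated, all obstruction and difference classes vanish regardless of twisting, and both existence and uniqueness up to homotopy rel a neighbourhood of $M$ follow as you claim.
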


\begin{proof}
Let $(W'; M', M)$ be an inverse $h$-cobordism given by corollary \ref{invertibility},
so that $W \odot W' \simeq [0, 1] \times M$. By remark \ref{hlemprod}, there is a
non-degenerate $2$-form $\omega$ on $[0, 1] \times M$ which coincides with
$\eta$ near $\{0\} \times M$. Restricting $\omega$ to $W$ gives the required
extension. Now suppose that we have two non-degenerate $2$-forms $\omega$
and $\omega'$ on $W$ which coincide with $\eta$ near $M$. According to what
we have just proved, they both extend further to $W'$ because $W'$ is an
$h$-cobordism. Again by remark \ref{hlemprod}, $\omega$ and $\omega'$ are
homotopic on $[0, 1] \times M$ relative to a neighbourhood of $\{0\} \times M$,
in particular they are homotopic on $W$ relative to a neighbourhood of $M$.
\end{proof}

\section{Contact manifolds and Weinstein cobordisms}

Let $(M, \xi)$ be a \emph{contact manifold}, we mean $\xi$ is a cooriented hyperplane field
which is maximally non-integrable. We always endow $M$ with the orientation induced
by $\xi$. An \emph{exact symplectic manifold} is a manifold $V$ together with a $1$-form
$\lambda$ such that $\d \lambda$ is a symplectic form. There are at least two notions of
isomorphism between exact symplectic manifolds. If $(V, \lambda)$ and $(V', \lambda')$
are exact symplectic manifold, a diffeomorphism $\Psi : V \to V'$ is said to be:

\begin{itemize}
\item an \emph{exact symplectomorphism} if $\Psi^*\lambda' - \lambda$ is an exact $1$-form on $W$.
\item a \emph{symplectomorphism} if $\Psi^*\lambda' - \lambda$ is a closed $1$-form on $W$.
\end{itemize}

The \emph{symplectization} of a contact manifold $(M, \xi)$ is an exact symplectic manifold that can
be described as follows. The space of cotangent vectors of $M$ vanishing on $\xi$ is a one-dimensional
subbundle of the cotangent bundle $\T^*M$. Restricting our attention to non-zero cotangent vectors which
induce the right coorientation of $\xi$ yields a principal $\R_+^*$-bundle that we denote by $\S_\xi M$.
Since $\xi$ is cooriented, this bundle admits global sections which correspond to \emph{contact forms}
for $\xi$. In particular, $\S_\xi M$ is diffeomorphic to $\R \times M$. The canonical $1$-form $\lambda$
of $\T^*M$ induces a $1$-form denoted by $\lambda_\xi$ on $\S_\xi M$ called the \emph{Liouville form},
whose exterior derivative $\omega_\xi = \d \lambda_\xi$ is a symplectic form (this is equivalent to $\xi$
being a contact structure). The principal bundle structure can be recovered from the $1$-form
$\lambda_\xi$. Indeed, the \emph{Liouville vector field} $X_\xi$, defined by $X_\xi \lrcorner
\omega_\xi = \lambda_\xi$, is the infinitesimal generator of the $\R_+^*$-action. The flow
$\varphi_{X_\xi}^t$ of $X_\xi$ satisfies $(\varphi_{X_\xi}^t)^* \lambda_\xi = e^t \lambda_\xi$,
so it preserves $\ker \lambda_\xi$. Hence the projection map
\[(\S_\xi M / \R_+^*, \ker \lambda_\xi) \to (M, \xi)\] is a contactomorphism.
In particular, the symplectization $(\S_\xi M, \lambda_\xi)$ entirely recovers the contact
manifold $(M, \xi)$. In other words, any diffeomorphism
$\Psi : (\S_\xi M, \lambda_\xi) \to (\S_{\xi'} M', \lambda_{\xi'})$ such that $\Psi^*\lambda_{\xi'} = \lambda_\xi$
induces a contactomorphism $(M, \xi) \to (M', \xi')$. However, theorem \ref{main} shows that if $\S_\xi M$ and
$\S_{\xi'} M'$ are only exact symplectomorphic, then $M$ and $M'$ need not even be diffeomorphic.

\begin{rem}\label{splitsymp}
If we choose a contact form $\alpha$ for $\xi$, the symplectization naturally splits as:
\[(\S_\xi M, \lambda_\xi) = (\R \times M, e^t \alpha)\]
\end{rem}

A \emph{Weinstein structure} on a cobordism $(W; M, M')$ is a triple $(\omega, X, \phi)$ where
$(X, \phi)$ is a Morse pair and $\omega$ is a symplectic form (positive with respect to the orientation of $W$)
such that $X . \omega = \omega$. We call $X$ the \emph{Liouville vector field}. It gives rise to a \emph{Liouville form}
$\lambda=X \lrcorner \omega$. In fact, $(\omega, X)$ and $\lambda$ are equivalent pieces of data, often called
a \emph{Liouville structure}. The Liouville form $\lambda$ induces contact structures $\xi$ on $M$ and
$\xi'$ on $M'$ with contact forms $\alpha = \iota^*\lambda$ and $\alpha' = \iota'^*\lambda$, where $\iota: 
M \to W$ and $\iota': M' \to W$ are the inclusion maps. We sometimes say that $(W, \omega, X, \phi)$ is a
Weinstein cobordism from $(M, \xi)$ to $(M', \xi')$.

\begin{rem}\label{gluingweinstein}
Let $(W, \omega, X, \phi)$ be a Weinstein cobordism from $(M, \xi)$ to
$(M', \xi')$ and $(W', \omega', X', \phi')$ be a Weinstein cobordism from
$(M', \xi')$ to $(M'', \xi'')$. We now explain how to compose them in a Weinstein cobordism
from $(M, \xi)$ to $(M'', \xi'')$. Suppose that the Liouville forms $\lambda$
and $\lambda'$ induce the same contact form $\alpha'$ on $M'$. The flow of the Liouville vector
fields $X$ and $X'$ define collar neighbourhoods $[-\epsilon, 0] \times M'$ in $W$ and
$[0, \epsilon] \times M'$ in $W'$ where $\lambda$ and $\lambda'$ both read $e^{t'} \alpha'$ ($t'$ is the
coordinate in $\R$)  .
Using these collar neighbourhoods, we can glue $W$ and $W'$ along $M'$ and get a smooth
cobordism $(W\odot W'; M, M'')$ with a Liouville structure $(\omega'', X'')$ that restricts to $(\omega, X)$ and to
$(\omega', X')$ respectively on $W$ and $W'$. Even if $\phi = \phi'$ on $M'$, they do not
necessarily glue to a smooth function on $W \odot W'$. This can be arranged by composing
$\phi$ with a diffeomorphism of $W$ which is the identity on $M'$ and supported in an arbitrary small
neighbourhood of $M'$. For example, it is enough to arrange that $X . \phi = 1$ and $X' . \phi' = 1$ in a
neighbourhood of $M'$. Finally, we get a Weinstein cobordism $(W \odot W', \omega'', X'', \phi'')$ from $(M, \xi)$ to $(M'', \xi'')$.
\end{rem}

The easiest example is the following: let $M$ be a closed manifold together wih a contact form $\alpha$. For any two
smooth functions $f_-, f_+$ on $M$ with $\max f_- < \min f_+$, we consider the part of symplectization 
\[W=\{(t, x) \in \R \times M | f_-(x) \leq t \leq f_+(x)\}.\]
It admits a Liouville structure $(\omega=\d(e^t \alpha), X=\frac{\pd }{\pd t})$. By choosing a Morse function $\phi$
(constant on the boundary, as always) without critical points such that $X . \phi > 0$, we get a Weinstein cobordism
$(W, \omega, X, \phi)$.

\begin{rem}\label{contactforms}
If $(W; M, M')$ has a cobordism with Weinstein structure $(\omega, X, \phi)$. This induces contact forms
$\alpha$ and $\alpha'$ respectively on $M$ and $M'$. By multiplying $\omega$ by a positive number, and
composing with parts of symplectizations as above, we can change the contact forms $\alpha$ and $\alpha'$
for any contact forms $e^k e^{-f} \alpha$ and $e^k e^{f'} \alpha'$ with $k \in \R$, and smooth functions $f : M \to [0, + \infty[$ and
$f' : M' \to [0, +\infty[$.
\end{rem}

A \emph{Weinstein homotopy} on $W$ is a smooth path $(\omega_s, X_s, \phi_s)$, such that
$(X_s, \phi_s)$ is a Morse homotopy and for all but finitely many parameters $s$ (where $(X_s, \phi_s)$
encounters a birth-death singularity) $(\omega_s, X_s, \phi_s)$ is a Weinstein structure.

For a cobordism to admit a Weinstein structure, it is necessary that it carries a non-degenerate $2$-form.
But there are more severe topological constraints due to the following (see \cite{CE2012} p.242 for a proof).

\begin{prop}
If $(W, \omega, X, \phi)$ is a Weinstein cobordism of dimension $2n$, then the critical points
of $\phi$ have index $\leq n$.
\end{prop}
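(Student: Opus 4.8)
The plan is to localize the statement at each critical point $p$ of $\phi$ and to combine two facts about the Liouville vector field $X$: first, that the Morse index of $p$ equals the dimension of the \emph{stable manifold} of $X$ through $p$, and second, that this stable manifold is \emph{isotropic} for $\omega$. Since an isotropic submanifold of a $2n$-dimensional symplectic manifold has dimension at most $n$, the two facts together force the index of $p$ to be $\leq n$, which is the assertion.

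For the first fact, I would begin by noting that the critical points of $\phi$ are exactly the zeros of $X$: away from them $X . \phi > 0$, whereas at a critical point the Morse lemma together with the pseudo-gradient condition give $X(p) = 0$ with hyperbolic linearization $X_p^{lin}$. Because $X . \phi > 0$, the function $\phi$ increases strictly along non-constant trajectories of $X$; hence the stable manifold $W^s(p) = \{x : \varphi_X^t(x) \to p \text{ as } t \to +\infty\}$ collects the points that approach $p$ from values below $\phi(p)$, that is, along the negative-definite directions of $\Hess\phi$ at $p$. A computation in Morse coordinates then identifies $\dim W^s(p)$ with the index of $p$ (the one-dimensional models $\phi = \pm x^2$ already fix the sign conventions correctly).

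The heart of the argument, and the step I expect to require the most care, is the isotropy of $W^s(p)$. Here I would use that $X$ is Liouville, so its flow rescales the symplectic form: $(\varphi_X^t)^*\omega = e^t\omega$. For tangent vectors $u, v \in T_x W^s(p)$ this reads
\[ e^t\,\omega_x(u,v) = \omega_{\varphi_X^t(x)}\big((\varphi_X^t)_* u,\ (\varphi_X^t)_* v\big). \]
As $t \to +\infty$ the point $\varphi_X^t(x)$ converges to $p$ and the pushed-forward vectors $(\varphi_X^t)_* u$ and $(\varphi_X^t)_* v$ decay to $0$, so, $\omega$ being a smooth and hence locally bounded form, the right-hand side tends to $0$. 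Since the left-hand side equals $e^t\,\omega_x(u,v)$ with $e^t \to \infty$, we must have $\omega_x(u,v) = 0$, proving $W^s(p)$ isotropic. The delicate point is precisely the decay $(\varphi_X^t)_* u \to 0$: it is not a formal consequence of $x \in W^s(p)$ but requires the hyperbolicity of $X_p^{lin}$, namely that $T_p W^s(p)$ is the stable subspace on which the linearized flow contracts exponentially, so that the stable manifold theorem guarantees the contraction of tangent vectors as well.

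Finally, combining the two facts gives $\operatorname{index}(p) = \dim W^s(p) \leq n$. I would add the remark that the conclusion can also be obtained infinitesimally, bypassing the analytic decay estimate: linearizing $X . \omega = \omega$ at $p$ yields
\[ \omega_p\big(X_p^{lin} u, v\big) + \omega_p\big(u, X_p^{lin} v\big) = \omega_p(u,v), \]
which shows (writing $\omega_p$ via a matrix $J$ with $A^\T J + JA = J$ for $A = X_p^{lin}$) that $A^\T$ is conjugate to $\id - A$, so the spectrum of $X_p^{lin}$ is invariant under $\mu \mapsto 1-\mu$. Each stable eigenvalue ($\operatorname{Re}\mu < 0$) is thereby matched injectively with a distinct eigenvalue of real part $> 1$; hence the stable directions, whose number is the index, total at most $n$.
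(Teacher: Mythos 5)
Your proof is correct and follows essentially the same route as the proof the paper defers to in \cite{CE2012}: critical points of $\phi$ are hyperbolic zeros of the Liouville field $X$, the stable manifold of such a zero has dimension equal to the Morse index, and the conformal expansion $(\varphi_X^t)^*\omega = e^t\omega$ together with the contraction of tangent vectors along the stable manifold forces that manifold to be isotropic, hence of dimension at most $n$. Your closing remark, extracting the bound directly from the spectral symmetry $\mu \mapsto 1-\mu$ of $X_p^{lin}$ obtained by linearizing $X . \omega = \omega$ at $p$, is a correct linearized shortcut to the same conclusion, not a genuinely different approach.
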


A Weinstein cobordism $(W, \omega, X, \phi)$ of dimension $2n$ is called \emph{subcritical} if 
the critical points of $\phi$ have index $< n$. It is known for some time that subcritical Weinstein
cobordisms exhibit remarkable flexibility properties (see \cite{MR1461569}). Yet a larger class of
Weinstein cobordisms with flexibility properties was recently discovered. A Weinstein cobordism
$(W, \omega, X, \phi)$ is called \emph{flexible} if it is the composition of finitely many Weinstein
cobordisms $(W^i, \omega^i, X^i, \phi^i)$ which are elementary (that is $X^i$ has no trajectory 
joining critical points) and whose attaching spheres of lagrangian handles form a \emph{loose}
legendrian link in the lower boundary of $W^i$ (see \cite{CE2012} p.250-251). Notice that it is
clear from the definition that the composition of two flexible Weinstein cobordisms is
still a flexible Weinstein cobordism.

We now state two theorems about flexible Weinstein structures that are relevant to
our purpose (\cite{CE2012}, p.279).

\begin{thm} [Cieliebak, Eliashberg]\label{weinsteinexist}
Let $(W; M, M')$ be a cobordism of dimension $2n\geq 6$ together with
a non-degenerate $2$-form $\eta$ and a Morse pair $(Y, \phi)$ with critical points of
index $\leq n$ such that $(\eta, Y, \phi)$ is a Weinstein structure near $M$.
Then there is a flexible Weinstein structure $(\omega, X, \phi)$ on $W$ such that 
$\omega = \eta$ near $M$.
\end{thm}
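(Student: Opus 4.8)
The plan is to build the Weinstein structure one critical \emph{level} at a time, transporting Smale's handle decomposition into the symplectic category. Since $\phi$ is to be kept fixed, I would order its finitely many critical values $c_1 < \cdots < c_m$; the passage of $\phi$ across a single value $c_i$ is an \emph{elementary} cobordism $W^i$ (elementary because $\phi$ is strictly increasing along the $X$-flow, so there can be no trajectory joining the critical points, which all lie on the level $c_i$). On a collar of $M$ we are already handed the Weinstein germ $(\eta,Y,\phi)$, and this is the base case of an induction: assuming a flexible Weinstein structure has been constructed on a sublevel set $\{\phi \le c\}$, with an induced contact form on its top boundary $(N,\xi_N)$, I would extend it across the next critical level by attaching standard symplectic handles. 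The local model is the standard index-$k$ Weinstein handle in $(\R^{2n},\omega_{std})$ (with $k \le n$), whose Liouville field has a hyperbolic zero with $k$-dimensional stable disk; placing such models near each critical point by the Morse lemma and interpolating by symplectization pieces in between keeps the prescribed $\phi$. Gluing an index-$k$ handle to $(N,\xi_N)$ requires an isotropic embedding of the attaching sphere $S^{k-1}$ into $N$ together with a trivialization of its symplectic normal bundle, after which the surgery yields the next contact boundary and an elementary Weinstein cobordism.

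The formal input for each handle is read off from the given data. At a critical point $p$ of index $k$, the index-$k$ descending disk meets the contact level just below $p$ in an embedded sphere $S^{k-1}$, which is the attaching sphere; at a fixed critical value these spheres form a disjoint union $\bigsqcup S^{k-1}$. Choosing an almost complex structure compatible with the non-degenerate form $\eta$, each such sphere acquires the structure of a \emph{formal} isotropic embedding: its tangent spaces lie, up to homotopy, inside the contact hyperplane field and are isotropic there, while $\eta$ supplies the required formal trivialization of the symplectic normal bundle. Thus every attaching sphere arrives with a canonical formal isotropic structure (a formal Legendrian when $k=n$), and the construction reduces to realizing these formal data by genuine isotropic attachments glued compatibly to the structure already built below.

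The realization step is where the h-principles enter, and where the dichotomy between subcritical and critical handles becomes decisive. If $k < n$, the attaching sphere $S^{k-1}$ has dimension $k-1 < n-1$ and is \emph{subcritical} isotropic; Gromov's h-principle for isotropic embeddings then deforms the formal isotropic embedding, through formal ones, to a genuine isotropic embedding with the prescribed framing, so the subcritical handle attaches with no obstruction whatsoever. If $k = n$, the attaching sphere is an $(n-1)$-dimensional \emph{Legendrian} sphere, and here no unrestricted h-principle holds: Legendrian embeddings are genuinely rigid, and this is the one true obstacle of the argument. It is dissolved precisely by flexibility. Since the theorem only demands the existence of \emph{some} flexible Weinstein structure, and not one realizing a prescribed rigid Legendrian, I am free to first stabilize the formal Legendrian link within its formal class so as to render it \emph{loose}, and then to invoke E. Murphy's h-principle for loose Legendrian embeddings to realize the formal isotopy by a genuine Legendrian isotopy. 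The critical handles are thus attached along a loose Legendrian link.

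Finally I would assemble the pieces. Each level contributes an elementary Weinstein cobordism whose critical ($k=n$) attaching spheres form a loose Legendrian link and whose subcritical handles carry no constraint; by the very definition of flexibility recalled above, the composite $W^1 \odot \cdots \odot W^m$ is therefore a flexible Weinstein cobordism. Because the construction starts from the given germ on a collar of $M$, the resulting symplectic form satisfies $\omega = \eta$ there; because every handle is modeled on the fixed Morse function $\phi$, the Liouville field $X$ extracted from the handle models is gradient-like for $\phi$ with hyperbolic zeros of the correct indices, so the triple carries exactly this $\phi$. This produces the desired flexible Weinstein structure $(\omega,X,\phi)$ on $W$ extending $\eta$ near $M$, with Murphy's theorem the single indispensable ingredient.
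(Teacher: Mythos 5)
This theorem is quoted in the paper from \cite{CE2012} (p.~279) without proof, so there is no internal argument to compare against; your proposal is, in substance, the actual proof of Cieliebak and Eliashberg: induct over the critical levels of the fixed function $\phi$ (each an elementary cobordism), attach standard Weinstein handle models, realize subcritical attaching spheres by Gromov's $h$-principle for isotropic embeddings with formal data supplied by $\eta$, and for index-$n$ handles stabilize the formal Legendrian attaching link to a loose one and apply Murphy's $h$-principle, which is precisely why the output is flexible. The only points worth tightening are that looseness must be arranged for the attaching \emph{link} as a whole (loose charts disjoint from all components, which stabilization does provide) and that one should record that the resulting $\omega$ is homotopic to $\eta$ rel a neighbourhood of $M$ through non-degenerate $2$-forms, both of which are handled in \cite{CE2012} exactly along the lines you describe.
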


\begin{thm}[Cieliebak, Eliashberg]\label{weinsteinuniq}
Let $(W; M, M')$ be a cobordism of dimension $2n \geq 6$ together with a flexible
Weinstein structure $(\omega, X, \phi)$. Then for any Morse homotopy $(Y_s, \phi_s)$ fixed near
the boundary, with critical points of index $\leq n$, such that $(Y_0, \phi_0) = (X, \phi)$,
there is a Weinstein homotopy $(\omega_s, X_s, \phi_s)$ satisfying:
\begin{itemize}
\item $(\omega_0, X_0, \phi_0) = (\omega, X, \phi)$
\item $(X_s, \phi_s)$ is fixed near $\pd W$, $\omega_s$ is fixed near $\pd_-W$ and
$\omega_s = e^{c_s} \omega_0$ near $\pd_+W$ for a smooth real-valued function $s \mapsto c_s$.
\end{itemize}
\end{thm}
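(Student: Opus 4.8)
The plan is to treat this as a parametric $h$-principle and to realize the given Morse homotopy, step by step, by a Weinstein homotopy. The starting observation is that after a small perturbation relative to $\partial W$, a generic Morse homotopy $(Y_s, \phi_s)$ decomposes into finitely many \emph{elementary moves}, separated in the parameter $s$: intervals on which the critical points are unchanged but the Morse pair is deformed (handle slides, i.e.\ isotopies of the attaching spheres inside the intermediate level sets), and isolated parameters at which a birth-death singularity occurs. Since the composition of Weinstein cobordisms is again Weinstein (remark \ref{gluingweinstein}), it suffices to produce a Weinstein homotopy realizing each elementary move in a standard local model and then to concatenate them, keeping $(X_s, \phi_s)$ fixed near $\partial W$, keeping $\omega_s$ fixed near $\partial_- W$, and collecting the accumulated conformal rescalings into a single function $s \mapsto c_s$ acting near $\partial_+ W$.

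For the moves that preserve the critical points, the essential point is that away from the critical locus a Liouville structure is rigid up to a Moser-type argument, so that the Weinstein structure near a given handle is governed by the isotropic (respectively Legendrian) attaching sphere of that handle in the appropriate contact level. A handle slide deforms such an attaching sphere by a smooth isotopy inside a level set; I would lift it to the symplectic side by deforming the sphere through isotropic, respectively Legendrian, submanifolds and carrying the Liouville structure along, absorbing the resulting change of the induced contact form into the conformal factor $e^{c_s}$ near $\partial_+ W$ (compare remark \ref{contactforms}). For the subcritical handles, of index $< n$, the attaching spheres are isotropic of positive codimension, and Gromov's $h$-principle for isotropic embeddings realizes any smooth isotopy by an isotropic one; this part is soft.

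The main obstacle is the top-index handles, whose attaching spheres are \emph{Legendrian} and for which Legendrian isotopy is in general rigid: the Morse homotopy only provides a \emph{formal} Legendrian isotopy of the attaching spheres, which a priori need not be realizable by a genuine Legendrian isotopy. This is exactly where the hypothesis of flexibility is used. Because $(W, \omega, X, \phi)$ is flexible, its Legendrian attaching spheres are \emph{loose}, and Murphy's $h$-principle for loose Legendrian embeddings \cite{M2012a} asserts that any formal Legendrian isotopy between loose Legendrians is $C^0$-approximated by a genuine Legendrian isotopy, parametrically in $s$. Feeding the formal isotopy coming from the Morse homotopy into Murphy's theorem produces the honest Legendrian isotopy needed to carry the Weinstein structure, and it is this step that forces the argument to remain inside the class of flexible structures.

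It then remains to treat the birth-death parameters and to assemble everything. A birth (respectively death) is handled by gluing in, or removing, a standard cancelling pair of Weinstein handles in a Darboux-type chart around the degenerate critical point, which is a purely local symplectic normal-form computation; looseness is preserved because the newly created Legendrian sphere may always be taken loose after a local stabilization. Concatenating these local Weinstein homotopies via remark \ref{gluingweinstein} and tracking the boundary conditions as above yields the Weinstein homotopy with the stated properties. The only genuinely hard input is Murphy's loose-Legendrian $h$-principle; the remaining steps are Morse-theoretic bookkeeping, Moser stability, and local symplectic models, together with the existence statement of Theorem \ref{weinsteinexist} to guarantee that each intermediate structure can indeed be chosen flexible.
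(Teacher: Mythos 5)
The paper does not prove this statement at all: it is quoted verbatim from Cieliebak and Eliashberg (\cite{CE2012}, p.~279) and used as a black box in the proof of Theorem \ref{mazurtricksymp}, so there is no in-paper argument to compare yours against. Judged against the actual proof in \cite{CE2012}, your sketch reproduces its architecture faithfully: decompose the Morse homotopy into elementary moves separated in the parameter $s$; realize the subcritical isotropic isotopies by Gromov's $h$-principle for isotropic embeddings (the soft part); invoke Murphy's $h$-principle for loose Legendrians to upgrade the merely formal Legendrian isotopies of the index-$n$ attaching spheres to genuine ones --- which is indeed the one place where flexibility enters and the reason the theorem fails without it; and treat birth-death moments by local models, with Moser stability and the conformal factor $e^{c_s}$ near $\partial_+ W$ doing the bookkeeping. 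You have correctly located the single hard input.

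Two steps are thinner than the real argument. First, at a birth you cannot make the newly created Legendrian sphere loose ``after a local stabilization'' as an afterthought: at the birth moment the index-$n$ handle must geometrically cancel the index-$(n-1)$ handle, and stabilizing its attaching sphere naively destroys the standard cancelling position. In \cite{CE2012} this is a dedicated lemma: a cancelling pair can be created whose Legendrian attaching sphere is loose, exploiting the fact that cancellation only requires a single transverse intersection point with the belt sphere, which leaves room to install a loose chart away from it. Second, flexibility is a condition on the attaching spheres as a \emph{link} --- each sphere loose in the complement of the others --- and you need this property to persist through handle slides and rearrangements of critical values; moreover, since the homotopy must be constructed relative to the boundary and eventually in families, the relative and parametric forms of Murphy's theorem are required, not just the statement that two formally isotopic loose Legendrians are Legendrian isotopic. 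Neither point is automatic, and both occupy substantial space in \cite{CE2012}. As a sketch identifying the structure of the proof and where the difficulty sits, your proposal is accurate; as a proof it defers precisely these two issues, plus the verification that all intermediate structures remain flexible so the $h$-principle stays applicable at each stage.
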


\section{Main results}
\subsection{Symplectomorphic symplectizations}

We start by a lemma which shows that theorem \ref{weinsteinexist}
can be applied to any $h$-cobordism of dimension $\geq 6$ from a closed contact manifold.

\begin{lem}\label{hcobweinstein}
Let $(M,\xi)$ be a closed contact manifold of dimension $\geq 5$ and let $(W; M, M')$ be
an $h$-cobordism. Then there is a flexible Weinstein structure $(\omega, X, \phi)$ on $W$
that induces a contact structure isotopic to $\xi$ on $M$ and which has only critical points
of index $2$ and $3$.
\end{lem}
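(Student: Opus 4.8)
The plan is to combine the normal-form result from Morse–Smale theory (Lemma~\ref{normalform}) with the Cieliebak–Eliashberg existence theorem (Theorem~\ref{weinsteinexist}), using Lemma~\ref{hlem} to produce the non-degenerate $2$-form that Theorem~\ref{weinsteinexist} requires as input. Concretely, I would start with the symplectization of $(M,\xi)$. By Remark~\ref{splitsymp}, choosing a contact form $\alpha$ for $\xi$ identifies $\S_\xi M$ with $(\R\times M, e^t\alpha)$, which near $\{0\}\times M$ gives a Liouville (hence Weinstein, with no critical points) germ inducing exactly the contact structure $\xi$. This germ supplies a non-degenerate $2$-form $\eta_0 = \d(e^t\alpha)$ defined near $M$, together with a Morse pair $(\partial_t, t)$ having no critical points near $M$.

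The next step is to extend this germ over all of $W$. Since $(W;M,M')$ is an $h$-cobordism of dimension $2n\geq 6$, Lemma~\ref{hlem} produces a non-degenerate $2$-form $\eta$ on $W$ agreeing with $\eta_0$ near $M$; moreover the extension is unique up to homotopy rel a neighbourhood of $M$, which is what lets the contact structure on $M$ be pinned down. In parallel, I invoke Lemma~\ref{normalform} to obtain a Morse pair $(Y,\phi)$ on $W$ with critical points only of index $2$ and $3$. Here one must take care that $(Y,\phi)$ agrees near $M$ with the gradient-like data coming from the symplectization germ, so that $(\eta, Y, \phi)$ is genuinely a Weinstein structure near $M$ (near $M$ there are no critical points, so ``Weinstein near $M$'' just means $Y$ is Liouville for $\eta$ there, which holds by construction). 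Since $W$ has dimension $2n\geq 6$ and the indices $2,3$ satisfy $2,3\leq n$ precisely when $n\geq 3$, the index hypothesis of Theorem~\ref{weinsteinexist} is met.

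With these data in place, Theorem~\ref{weinsteinexist} applies directly: it yields a flexible Weinstein structure $(\omega, X, \phi)$ on $W$ with $\omega = \eta$ near $M$ and with the \emph{same} Morse function $\phi$, hence with critical points only of index $2$ and $3$. Because $\omega = \eta = \eta_0 = \d(e^t\alpha)$ near $M$, the induced Liouville form restricts to $\alpha$ on $M$, so the contact structure induced on $M$ is $\xi$ itself (or at worst isotopic to it, if the normalization of the germ forces a conformal rescaling of $\alpha$, which only changes the contact form and not the contact structure). This gives all three assertions of the lemma.

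\emph{The main obstacle} I anticipate is the compatibility of the three pieces of input to Theorem~\ref{weinsteinexist} near $M$: the Morse pair from Lemma~\ref{normalform} and the $2$-form from Lemma~\ref{hlem} are each produced independently, and one must arrange that together they form a Weinstein germ near $M$ inducing the prescribed contact structure. The cleanest route is to fix the symplectization germ $\d(e^t\alpha)$ first and then run both lemmas \emph{rel} a neighbourhood of $M$, ensuring the Morse pair has no critical points and is gradient-like for the standard Liouville field near $M$, and that the $2$-form agrees with $\d(e^t\alpha)$ there. The uniqueness clause in Lemma~\ref{hlem} is what guarantees this can be done without disturbing the germ, so the bookkeeping, rather than any deep new idea, is the crux; the index condition $2,3\leq n$ needs $n\geq 3$, i.e.\ $\dim W\geq 6$, consistent with the hypothesis $\dim M\geq 5$.
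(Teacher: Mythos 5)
Your proposal follows the paper's proof essentially verbatim: take a collar of $M$ with the standard Weinstein germ $(\d(e^t\alpha), \frac{\partial}{\partial t}, t)$, extend the non-degenerate $2$-form by Lemma~\ref{hlem}, extend the Morse pair to one with critical points of index $2$ and $3$ by Lemma~\ref{normalform}, and apply Theorem~\ref{weinsteinexist}. The only slight imprecision is your claim that the resulting Liouville form restricts to $\alpha$ on $M$: Theorem~\ref{weinsteinexist} fixes $\omega = \eta$ near $M$ but not the Liouville field $X$, which is precisely why the paper (and the lemma) asserts only that the induced contact structure is \emph{isotopic} to $\xi$.
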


\begin{proof}
Take a collar neighbourhood $[0, \epsilon] \times M$ of $M$ in $W$. Consider the standard
Weinstein structure $(\d(e^t \alpha), \frac{\pd}{\pd t}, t)$ in this collar. By lemma \ref{hlem},
the $2$-form $\d(e^t\alpha)$ extends to $W$ as a non-degenerate $2$-form. By lemma
\ref{normalform}, the Morse pair $(\frac{\pd}{\pd t}, t)$ extends to a Morse pair $(Y, \phi)$ on $W$
with only critical points of index $2$ and $3$. We now apply theorem \ref{weinsteinexist} to get
a flexible Weinstein structure $(\omega, X, \phi)$ such that $\omega = \eta$ near $M$.
Then the induced contact structure on $M$ is isotopic to $\xi$.
\end{proof}

\begin{rem}\label{identification}
By Gray's stability theorem, any two isotopic contact structures are contactomorphic. So
after applying lemma \ref{hcobweinstein}, we may compose the identification of $\pd_- W$
with $M$ by such a contactomorphism to actually get a Weinstein cobordism from $(M, \xi)$.
We will do this implicitly in the proof of theorem \ref{mazurtricksymp} below.
\end{rem}

We now turn to our main result which can be thought of as a symplectic analogue of
corollary~\ref{mazurtrick}.

\begin{thm}\label{mazurtricksymp}
Let $(M, \xi)$ be a closed contact manifold of dimension $\geq 5$. Then for any $h$-cobordism
$(W; M, M')$ there is a contact structure $\xi'$ on $M'$ such that $(\S_\xi M, \lambda_\xi)$ and
$(\S_{\xi'} M', \lambda_{\xi'})$ are exact symplectomorphic.
\end{thm}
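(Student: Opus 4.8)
The plan is to set up a symplectic refinement of the Mazur trick used in the proof of Corollary~\ref{mazurtrick}, producing the building blocks with Lemma~\ref{hcobweinstein} and using Theorem~\ref{weinsteinuniq} to show that the ``doubled'' cobordisms are genuine pieces of symplectization. First I would apply Lemma~\ref{hcobweinstein} to $(W; M, M')$ starting from $(M, \xi)$; together with Remark~\ref{identification} this yields a flexible Weinstein cobordism from $(M, \xi)$ to some $(M', \xi')$, with only critical points of index $2$ and $3$, and this defines the candidate $\xi'$. Using Corollary~\ref{invertibility} I would pick an inverse $h$-cobordism $(W'; M', M)$ with $W \odot W' \simeq [0,1]\times M$ and $W' \odot W \simeq [0,1]\times M'$ (diffeomorphisms rel boundary), and apply Lemma~\ref{hcobweinstein} again, now starting from $(M', \xi')$, to equip $W'$ with a flexible Weinstein structure from $(M', \xi')$ to $(M, \xi_1)$ for some $\xi_1$. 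After rescaling the induced contact forms so that they agree on $M'$ (Remark~\ref{contactforms}), Remark~\ref{gluingweinstein} glues these into a flexible Weinstein cobordism $W \odot W'$ from $(M, \xi)$ to $(M, \xi_1)$, still with index $2,3$ critical points and still diffeomorphic rel boundary to the product $[0,1]\times M$.

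The crucial step is to show that this doubled cobordism is, up to Weinstein homotopy, a trivial piece of symplectization. Since $W \odot W'$ is a product cobordism it is an $h$-cobordism of vanishing Whitehead torsion, so Lemma~\ref{cancelling} provides a Morse homotopy, fixed near the boundary, from its Morse pair to a critical-point-free one. As the dimension $2n = \dim M + 1$ is at least $6$ and the indices $2,3$ are $\leq n$, Theorem~\ref{weinsteinuniq} upgrades this to a Weinstein homotopy $(\omega_s, X_s, \phi_s)$ ending at a critical-point-free Weinstein structure $(\omega_1, X_1, \phi_1)$, fixed near $\pd_-$ and conformal ($\omega_s = e^{c_s}\omega_0$) near $\pd_+$. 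A critical-point-free Liouville structure is trivial: flowing along $X_1$ identifies $W \odot W'$ with a region $\{g_-(x) \leq t \leq g_+(x)\}$ of the symplectization $(\R \times M, e^t\alpha)$ of $(M, \xi)$; in particular $\xi_1$ is contactomorphic to $\xi$. A standard Moser-type argument then turns the Weinstein homotopy into an exact symplectomorphism, so $W \odot W'$ is exact symplectomorphic to a piece of $\S_\xi M$. The same reasoning applied to $W' \odot W$ (now using $\xi_1 \cong \xi$ to form the gluing, together with $W' \odot W \simeq [0,1]\times M'$) shows it is exact symplectomorphic to a piece of $\S_{\xi'} M'$.

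With these blocks in hand I would run the Mazur trick exactly as in Corollary~\ref{mazurtrick}, but at the level of exact symplectic manifolds. Having identified $\xi_1$ with $\xi$, the cobordisms $W$ and $W'$ glue alternately into an open Weinstein manifold $V = \cdots \odot W' \odot W \odot W' \odot W \odot \cdots$, and
\[\S_\xi M \cong \bigodot_{j \in \Z}\bigl([j,j+1]\times M\bigr) \cong \bigodot_{j\in\Z}(W \odot W') \cong V \cong \bigodot_{j\in\Z}(W'\odot W) \cong \bigodot_{j\in\Z}\bigl([j,j+1]\times M'\bigr) \cong \S_{\xi'}M',\]
where every $\cong$ is either a tautological regrouping of the same manifold or an exact symplectomorphism furnished by the previous paragraph.

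I expect the main obstacle to be the crucial step above: it is precisely here that flexibility is indispensable, since without it one could not realize the purely Morse-theoretic cancellation of Lemma~\ref{cancelling} as a Weinstein homotopy, and indeed such a realization must fail in general because $M$ and $M'$ are typically not even diffeomorphic. Secondary care is needed to keep the induced contact forms matched at every interface of the infinite gluing (via Remark~\ref{contactforms}) and to control the accumulation of the conformal factors $e^{c_s}$ appearing at the top of each block, so that absorbing them into the $\R$-translation of the symplectization, the construction genuinely exhausts $\R \times M$ and $\R \times M'$.
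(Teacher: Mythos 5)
Your overall strategy is indeed the paper's: the same building blocks (Lemma \ref{hcobweinstein} applied to $W$ and to an inverse cobordism $W'$ from Corollary \ref{invertibility}, contact forms matched via Remark \ref{contactforms}, gluing via Remark \ref{gluingweinstein}), the same use of Lemma \ref{cancelling} upgraded by Theorem \ref{weinsteinuniq} to homotope the product block to a critical-point-free Weinstein structure, and the same Mazur trick on the infinite composition. However, there is a genuine gap in your crucial step, namely the claim that a ``standard Moser-type argument'' shows each compact block $(W \odot W', \lambda_0)$ is exact symplectomorphic to a piece of $\S_\xi M$, with the displayed chain of $\cong$'s then furnished block by block. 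Two things go wrong. First, on a compact cobordism Moser's argument does not produce an isotopy of the block: the homotopy is only \emph{conformal} near $\pd_+ W''$ (there $\omega_s = e^{c_s}\omega_0$, $\lambda_s = e^{c_s}\lambda_0$), so the Moser vector field equals $-\dot c_s X_s$ near the top and is transverse to the boundary; points flow in or out according to the sign of $\dot c_s$, and you get at best an exact symplectomorphism onto an uncontrolled region, not a symplectomorphism rel boundary. Second, and more seriously, even granting that each block is symplectomorphic to \emph{some} region of the symplectization, the identification $\bigodot_{j}(W \odot W') \cong \bigodot_{j}([j,j+1]\times M)$ requires these block identifications to be strict (pulling back $e^t\alpha$ to $\lambda$) near every interface so that they glue; a blockwise Moser argument gives no such boundary control. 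This interface-matching is not the ``secondary care'' you relegate it to --- it is the main technical content of the proof, and it is exactly the point where your proposed chain of $\cong$'s would fail as stated.

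The paper's resolution, which your phrase ``absorbing the conformal factors into the $\R$-translation'' gestures at but does not carry out, is to avoid blockwise statements altogether. After re-identifying $\pd_+ W'$ via the holonomy contactomorphism of the critical-point-free field $X''_1$ (so that the top contact form becomes exactly $e^k\alpha$ with $k>0$), one forms $V$ from the rescaled translates $(W^j, e^{jk}\omega, X, \phi+2j)$, extends the homotopy to \emph{all of} $V$ at once as $(e^{jc_s}e^{jk}\omega''_s, X''_s, \phi''_s + 2j)$ --- this is precisely where the factors $e^{c_s}$ get absorbed --- and then runs a single global Moser argument: the fields $X_s$ are complete by translation invariance, the maps $\Theta^{2j}_s = \varphi^{j(C-c_s)}_{X_s}\circ \varphi^{-jC}_{X_0}$ satisfy $(\Theta^{2j}_s)^*\lambda_s = \lambda_0$ near a disjoint family of hypersurfaces $\tilde{M^j}$, and Moser's lemma applied in the regions between consecutive $\tilde{M^j}$ yields one isotopy $\Psi_s$ of $V$ with $\Psi_s^*\lambda_s - \lambda_0$ exact. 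Finally $V \cong \R\times M$ via the flow of the complete, nowhere-vanishing $X_1$, and repeating the argument starting from the product block $W'^{-1}\odot W^0$ (from $M'$ to $M'$) gives $V \cong \S_{\xi'}M'$. If you restructure your final step along these global lines, your proof coincides with the paper's.
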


\begin{proof}
Let $(W'; M', M)$ be an inverse $h$-cobordism of $(W; M, M')$ given by corollary \ref{invertibility}.
By lemma \ref{hcobweinstein}, there is a flexible Weinstein structure $(\omega, X, \phi)$ on $W$ which
induces the contact structure $\xi$ on $M$. It also induces a contact structure $\xi'$ on $M'$.
Again by lemma \ref{hcobweinstein}, there is a flexible Weinstein structure $(\omega', X', \phi')$ on $W'$
that induces the contact structure $\xi'$ on $M'$. Denote by $\alpha$ and $\alpha'$ the contact
forms respectively on $M$ and $M'$ induced by $(W, \omega, X, \phi)$. According to remark
\ref{contactforms}, we can arrange $W'$ so that the contact form induced on $M'$ equals $\alpha'$.
Up to composing $\phi$ and $\phi'$ by affine transformations of $\R$, we can assume that $\phi = 0$
on $M$, $\phi=1$ on $M'$, $\phi' = 1$ on $M'$ and $\phi' = 2$ on $M$. After arranging the functions
$\phi$ and $\phi'$ as in remark \ref{gluingweinstein}, we can compose $W$ and $W'$ to get a smooth
cobordism $W'' = W \odot W'$ together with a Weinstein structure $(\omega'', X'', \phi'')$ which restricts to
$(\omega, X, \phi)$ on $W$ and to $(\omega', X', \phi')$ on $W'$. The function $\phi''$ has only critical
points of index $2$ and $3$. Since $W \odot W'$ is diffeomorphic to a product cobordism, lemma
\ref{cancelling} implies that there is a Morse homotopy $(Y_s, \phi''_s)$ fixed near the boundary such
that $(Y_0, \phi''_0) = (X'', \phi'')$ and $\phi''_1$ has no critical points. Now by theorem
\ref{weinsteinuniq}, there is a Weinstein homotopy $(\omega''_s, X''_s, \phi''_s)$ such that:
\begin{itemize}
\item $(\omega''_0, X''_0, \phi''_0) = (\omega'', X'', \phi'')$
\item $(X''_s, \phi_s)$ is fixed near $\pd W''$, $\omega''_s$ is fixed near $\pd_-W''$ and
$\omega_s = e^{c_s} \omega''_0$ near $\pd_+W''$ for a smooth real-valued function $s \mapsto c_s$.
\end{itemize}

Near $\pd_+ W''$, $X''_s$ is fixed and $\omega''_s$ is fixed up to a constant, so in particular, the contact
structure $\xi''_s$ induced on $\pd_+ W'' = M$ is fixed during the homotopy. The holonomy of the
Liouville vector field $X''_1$ defines a contactomorphism $(M, \xi)$ to $(M, \xi''_1)$. In the cobordism $W'$,
we now change the identification of $\pd_+ W'$ with $M$ by composing it with this contactomorphism (as in
remark \ref{identification}), so that the contact structure on $M$ induced by $(W', \omega', X', \phi')$
is equal to $\xi$. According to remark \ref{contactforms}, we may compose $W'$ with a part of the
symplectization of $M$ so that it induces the contact forms $e^k \alpha$ for some $k > 0$. The Weinstein
homotopy $(\omega''_s, X''_s, \phi''_s)$ obviously extends to this slightly enlarged cobordism since
$(X''_s, \phi''_s)$ is fixed near $\pd_+ W''$ and $\omega''s= e^{c_s} \omega''_0$ near $\pd_+ W''$.
Up to composing $\phi''_s$ with a diffeomorphism of $\R$, assume that $\phi''_s = 2$ on $\pd_+ W''$
still holds.

In the spirit of the proof of corollary \ref{mazurtrick}, we will construct an exact symplectic manifold
$V$ by gluing infinitely many copies of $W$ and $W'$ and show that $V$ is exact symplectomorphic
to both $\S_\xi M$ and $\S_{\xi'} M'$.

We now define translates of $W$ and $W'$ as follows, for $j \in \Z$:

\[(W^j, \omega^j, X^j, \phi^j) = (W, e^{jk} \omega, X, \phi + 2j) \text{ and }
(W'^j, \omega'^j, X'^j, \phi^j) = (W', e^{jk} \omega', X', \phi' + 2j),\]

and consider:
\[V =\cdots \odot W^{-1} \odot W'^{-1} \odot W^0 \odot W'^0 \odot W^1 \odot W'^1 \odot \cdots\]

According to remark \ref{gluingweinstein}, this is well-defined and carries a Weinstein structure
$(\omega, X, \phi)$ that restricts to the given one on each $W_i$ and $W'_i$.

We now prove that $V$ is exact symplectomorphic to $\S_\xi M$.

We want to repeat the homotopy $(\omega''_s, X''_s, \phi''_s)$ on the whole $V$ by translation.
We just need to take care of the scaling factor $e^{c_s}$ near the top boundary. So define, for $j \in \Z$,
on $W^j \odot W'^j$:
\[(\omega_s, X_s, \phi_s) = (e^{j c_s} e^{jk} \omega''_s, X''_s, \phi''_s +2j).\]

This gives a Weinstein homotopy of $V$ during which the vector field $X_s$ is complete (it is invariant by
translation in $j$) and is transverse to the hypersurfaces $M^{j}=\phi_s^{-1}(2j)=\phi^{-1}(2j) \simeq M$
for all $j \in \Z$. Note that this homotopy is fixed near $\phi^{-1}(0) \simeq M$ (we will make use of this
in section 4.2).

We now look for an isotopy $\Psi_s$ of $V$ such that $\Psi_s^* \lambda_s - \lambda_0$ is exact
(here $\lambda_s = X_s \lrcorner \omega_s$). We will find it using Moser's lemma (see \cite{CE2012}
p. 240-241 for a similar argument). Take $C > \max(0, \max c_s)$ and consider
$\tilde{M^j} = \varphi^{jC}_{X_0} (M^j)$
($\varphi_X^t$ denotes the flow at time $t$ of a vector field $X$).

Since $X_s$ is complete for all $s \in [0,1]$, we can define:

\[\Theta^{2j}_s = \varphi^{j(C-c_s)}_{X_s} \circ \varphi_{X_0}^{-jC} : \tilde{M^j} \to V\]
And we have:
\begin{align*}
(\Theta_s^{2j})^*\lambda_s &= (\varphi_{X_0}^{-jC})^* \circ (\varphi^{j(C-c_s)}_{X_s})^*  (\lambda_s)\\
& = (\varphi_{X_0}^{-jC})^* (e^{- j(C-c_s)} \lambda_s) \\
& = (\varphi_{X_0}^{-jC})^* (e^{jC} \lambda_0) \\
& = \lambda_0
\end{align*}

We can extend $\Theta_s^{2j}$ near $\tilde{M^j}$ in a unique way so that $(\Theta_s^{2j})^* \lambda_s = \lambda_0$.
The image of $\Theta_s^{2j}$ is $\varphi_{X_s}^{j(C-c_s)}(M^j)$, so they are all disjoint. Hence we can find an isotopy
$\Theta_s : V \to V$ that coincides with $\Theta_s^{2j}$ near $\tilde{M^j}$ for all $j$. The path $\Theta_s^* \lambda_s$ is
now fixed near each $\tilde{M^j}$ and Moser's lemma applied to each region between $\tilde{M^j}$
and $\tilde{M^{j+1}}$ gives an isotopy $\Psi^s : V \to V$ such that $\Psi_s ^*\lambda_s - \lambda_0$
is exact.

Since $X_1$ is complete and nowhere vanishing, its flow defines a diffeomorphism $\Xi : \R \times M \to V$
which satisfies $\Xi^*\lambda_1 = e^{t} \alpha$. The map $\Xi^{-1} \circ \Theta_1$ is the required exact
symplectomorphism from $(V, \lambda)$ to $(\S_\xi M, \lambda_\xi)  = (\R \times M, e^t \alpha)$.

Since $(W'^{-1} \odot W^0; M', M')$ is a product cobordism, we can apply exactly the same reasoning and
find another Weinstein homotopy of $V$, which we then turn into an exact symplectomorphism from $(V, \lambda)$ to
$(\S_{\xi'} M', \lambda_{\xi'})$.
\end{proof}

Lemma \ref{hcobweinstein} and theorem \ref{mazurtricksymp} all together imply the theorem \ref{main}
stated in the introduction.

\begin{rem}
Given a closed contact manifold $(M, \xi)$ of dimension $\geq 5$, we have associated to any $h$-cobordism
from $M$ a contact manifold $(M', \xi')$ such that $\S_\xi M$ and $\S_{\xi'} M'$ are exact symplectomorphic.
So by the $s$-cobordism theorem, this produces as many contact manifolds as the cardinality of $\Wh(\pi_1 M)$.
Of course, this is only interesting when $\Wh(\pi_1 M) \neq 0$. Note that the example given in the introduction
together with $s$-cobordism theorem shows that $\Wh(\Z/7\Z) \neq 0$ (see \cite{MR0362320} p.42-45 for more
examples of non-trivial Whitehead groups).
\end{rem}

\subsection{Contact manifolds at infinity of Weinstein and Stein manifolds}

A Weinstein structure on an open manifold $V$ is a triple $(\omega, X, \phi)$ where $\omega$ is a symplectic
form, $X$ is a complete vector field such that $X . \omega = \omega$, $\phi$ is a Morse function on $V$
(proper and bounded from below) for which $X$ is a pseudo-gradient vector field. Notice that the region between two
regular values of $\phi$ is a Weinstein cobordism in the sense of section $3$. We call $(V, \omega, X, \phi)$ of \emph{finite
type} if there is $c>0$ such that $\phi^{-1}([c, +\infty[)$ does not contain any critical point. In this case, the level sets
of $\phi$ above $c$ are all contactomorphic by flowing along the Liouville vector field $X$, we call it the \emph{contact
manifold at infinity} of $(V, \omega, X, \phi)$. This depends only on $(\omega, X)$ and we may think that it
is actually independent of $X$ (see \cite{CE2012} p.238-239). As a corollary of the proof of theorem \ref{mazurtricksymp},
we show that this is not the case.

We need the following notion of homotopy for open weinstein manifold (see \cite{CE2012} p.246).
A \emph{weinstein homotopy} on $V$ is a smooth path $(\omega_s, X_s, \phi_s)_{s \in [0,1]}$ of Weinstein
structures such that $(X_s, \phi_s)$ is a generic path (it encounters only birth-death type singularities),
there is a subdivision $0 = a_0 < a_1 < \cdots < a_p = 1$, and for each $i \in \{0, \cdots, p-1\}$ an increasing
sequence $(c^i_k) \tend_{k \to + \infty} + \infty$ of regular values of $\phi_s$ for all $s \in [a_i, a_{i + 1}]$.
This definition prevents critical points to escape at infinity during a Weinstein homotopy.

\begin{cor}\label{mazurweinstein}
Let $(V, \omega, X, \phi)$ be a finite type Weinstein manifold of dimension $\geq 6$ with contact
manifold at infinity contactomorphic to $(M, \xi)$. For any $h$-cobordism $(W; M, M')$ there is a
Weinstein homotopy $(\omega_s, X_s, \phi_s)_{s \in [0,1]}$ such that $(\omega_0, X_0, \phi_0)
= (\omega, X, \phi)$ and $(W, \omega_1, X_1, \phi_1)$ is a finite type Weinstein manifold with
contact manifold at infinity diffeomorphic to $M'$.
\end{cor}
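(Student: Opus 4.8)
The plan is to re-use the Weinstein homotopy constructed in the proof of Theorem \ref{mazurtricksymp}, reinterpreting it as a deformation of the positive end of $V$ and reading off $M'$ at infinity through a different grouping of the same building blocks.

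First I would use the finite type hypothesis to isolate the end. Choose $c$ with $\phi^{-1}([c,+\infty[)$ free of critical points; flowing along $X$ identifies $E:=\phi^{-1}([c,+\infty[)$ with the positive symplectization end $([0,+\infty[\times M, e^t\alpha)$ for a contact form $\alpha$ of $\xi$, and writes $V=V_0\cup_M E$ with $V_0=\phi^{-1}(]-\infty,c])$ a Weinstein domain. Any \emph{weinstein homotopy} of the structure on $E$ that is fixed near the bottom $\{0\}\times M$ extends by the identity over $V_0$ to a weinstein homotopy of $V$, so it suffices to work on $E$. I would then set up the flexible blocks exactly as in Theorem \ref{mazurtricksymp}: take an inverse $h$-cobordism $W'$ (Corollary \ref{invertibility}) and, via Lemma \ref{hcobweinstein} and Remarks \ref{contactforms} and \ref{gluingweinstein}, flexible Weinstein structures on $W$ (from $(M,\xi)$ to some $(M',\xi')$) and on $W'$ (from $(M',\xi')$ back to $(M,\xi)$), with matching contact forms and index $2,3$ critical points, normalized so the periodic translates compose into an end. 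Using the Mazur-trick diffeomorphism (as in Corollary \ref{mazurtrick}, one-sided and relative to the bottom) I fix once and for all an identification of the smooth manifold $E$ with both $(W\odot W')^{\odot\infty}$ and $W\odot(W'\odot W)^{\odot\infty}$; the underlying manifold never changes, only the Weinstein data on it.

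The heart is a two-stage homotopy $\Phi_s$ on $E$, every stage fixed near the bottom $M$. For $s\in[0,\tfrac12]$ I run the homotopy of Theorem \ref{mazurtricksymp} in reverse on the grouping $(W\odot W')^{\odot\infty}$: since that homotopy is fixed near $\phi^{-1}(0)\cong M$, its restriction to the positive half connects the with-critical-points structure (flexible $W$ and $W'$ placed alternately on the blocks) to the trivial symplectization structure over $M$, and reversing it sends the given end of $V$ (our $\Phi_0$) to that with-critical-points structure. For $s\in[\tfrac12,1]$ I keep the first block $W$ untouched and, on the grouping $W\odot(W'\odot W)^{\odot\infty}$, run simultaneously in every product block $W'\odot W$ the cancelling homotopy furnished by Lemma \ref{cancelling} and Theorem \ref{weinsteinuniq} (it is fixed near each block's lower boundary and scales the form by $e^{c_s}$ near the upper one). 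The two stages match at $s=\tfrac12$ because both groupings merely partition the same alternating sequence flexible $W$, flexible $W'$, flexible $W,\dots$. At $s=1$ every $W'\odot W$ block has become a piece of the symplectization of $M'$, so $\Phi_1$ is flexible $W$ followed by the symplectization end over $M'$: it is finite type, its critical points are confined to $V_0\cup W$, and its level sets above $W$ are diffeomorphic to $M'$, so its contact manifold at infinity is $(M',\xi')$, in particular diffeomorphic to $M'$.

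Finally I would glue $\Phi_s$ to the identity on $V_0$ to obtain the Weinstein homotopy of $V$ required by the corollary, and check that it is a weinstein homotopy in the prescribed sense: with the subdivision $\{0,\tfrac12,1\}$, the block-boundary levels stay regular values throughout each stage and march off to infinity, so no critical point escapes at infinity. The main obstacle is precisely this assembly -- stitching the infinitely many block homotopies into one smooth family whose Liouville forms agree across every interface requires carrying the per-block scaling $e^{c_s}$ (handled by the same translation-dependent factor $e^{jc_s}e^{jk}$ used in Theorem \ref{mazurtricksymp}) and normalizing the two groupings so that the structures coincide at $s=\tfrac12$. Once this bookkeeping is arranged, everything else is routine and follows the cited proof verbatim.
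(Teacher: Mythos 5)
Your proposal is correct and follows the paper's proof, which likewise isolates the end $\{\phi \geq c\} \cong [0,+\infty[ \times M$ via the Liouville flow, invokes the Weinstein homotopy constructed in the proof of Theorem \ref{mazurtricksymp} (fixed near $\{0\} \times M$, ending with no critical points above some level $c'$ whose level set is diffeomorphic to $M'$), and extends by the constant homotopy over $\{\phi \leq c\}$. Your two-stage description --- reversing the cancellation homotopy on the $(W \odot W')$-grouping, then cancelling in the regrouped $W' \odot W$ blocks while keeping the first copy of $W$ fixed --- is precisely the content the paper leaves implicit in its one-line citation of that proof, including the correct one-sided regrouping needed so that the second stage is fixed near the bottom of the end.
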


\begin{proof}
Let $c$ be sufficiently close to $+ \infty$ so that $\phi$ has no critical points in $\{\phi \geq c\}$. Then $\phi^{-1}(c)$ is
contactomorphic to $(M, \xi)$ and the flow of $X$ identifies $\{\phi \geq c\}$ with $[0, +\infty[ \times M$.
The proof of theorem \ref{mazurtricksymp} shows that there is a Weinstein homotopy $(\omega_s, X_s, \phi_s)$ on
$[0, + \infty[ \times M$ such that:
\begin{itemize}
\item $(\omega_0, X_0, \phi_0) = (\omega, X, \phi)$
\item $(\omega_s, X_s, \phi_s)$ is fixed near $\{0\} \times M$
\item For $c' > 0$ sufficiently large, $\{\phi_1 \geq c'\}$ contains no critical points of $\phi_1$
and $\phi_1^{-1}(c')$ is diffeomorphic to $M'$.
\end{itemize}
We extend the Weinstein homotopy by a constant homotopy on $\{\phi \leq c\} = \{\phi_s \leq c\}$ to get the
result.
\end{proof}

\begin{rem}
\begin{enumerate}
\item If $M$ and $M'$ are not diffeomorphic, critical points have to appear out of every compact set
during the Weinstein homotopy in corollary \ref{mazurweinstein} because otherwise the topology of the
contact manifold at infinity would not change.
\item The Weinstein homotopy can be made fixed on an arbitrary large compact set of $V$: in some sense,
it only move things at infinity.
\item According to the proof of theorem \ref{mazurtricksymp}, we can find an isotopy $\Psi_s$
of $V$ such that $\Psi_s^* \lambda_s = \lambda_0 + \d f_s$. In particular, we get a Weinstein
homotopy $(\omega_0, \Psi_s^* X_s, \Psi_s^*\phi_s)$ with fixed symplectic form during which the topology
of the contact manifold at infinity changes.
\item Since the homotopy in corollary \ref{mazurweinstein} only concerns the cylindrical end $[0, +\infty[ \times M$.
This applies of course to any symplectic manifold with cylindrical end, not necessarily Weinstein.
\end{enumerate}
\end{rem}

And finally using the Weinstein-Stein correspondence from \cite{CE2012}, we can give a corollary
concerning the complex geometry of Stein manifolds.

\begin{cor}\label{mazurstein}
Let $(V, J, \phi)$ be a finite type Stein manifold of dimension $\geq 6$ with contact manifold at infinity
contactomorphic to $(M, \xi)$. For any $h$-cobordism $(W; M, M')$, there is a Stein homotopy
$(J, \phi_s)_{s \in [0,1]}$ such that $\phi_0 = \phi$ and $(V, J, \phi_1)$ is a finite type Stein manifold
with contact manifold at infinity diffeomorphic to $M'$.
\end{cor}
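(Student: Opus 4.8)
The plan is to reduce the statement to the Weinstein case already settled in Corollary \ref{mazurweinstein}, by passing back and forth through the Weinstein–Stein correspondence of \cite{CE2012}. Recall that a finite type Stein structure $(V, J, \phi)$, with $\phi$ an exhausting $J$-convex Morse function, canonically determines a finite type Weinstein structure: one sets $\omega_\phi = -\d(\d\phi \circ J)$, which is symplectic precisely because $\phi$ is $J$-convex, and takes $X_\phi$ to be the gradient of $\phi$ for the Kähler metric $\omega_\phi(\cdot, J\cdot)$. Then $(\omega_\phi, X_\phi, \phi)$ is a Weinstein structure, and for $c$ large the $J$-convex hypersurface $\phi^{-1}(c)$ carries the contact structure induced by $X_\phi$. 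Hence the contact manifold at infinity of the Stein manifold coincides with that of its associated Weinstein manifold, namely $(M, \xi)$.

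First I would apply Corollary \ref{mazurweinstein} to the finite type Weinstein manifold $(V, \omega_\phi, X_\phi, \phi)$ and the given $h$-cobordism $(W; M, M')$. This produces a Weinstein homotopy $(\omega_s, X_s, \phi_s)$, which is a legitimate homotopy of open Weinstein manifolds in the sense defined above, since the required increasing sequences of regular values tending to $+\infty$ are exactly what Corollary \ref{mazurweinstein} provides. It starts at $(\omega_\phi, X_\phi, \phi)$ and ends at a finite type Weinstein structure whose contact manifold at infinity is diffeomorphic to $M'$; moreover, by construction it is fixed on an arbitrarily large compact set and only rearranges the cylindrical end.

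The final, and most delicate, step is to lift this Weinstein homotopy to a Stein homotopy with the complex structure $J$ kept fixed, varying only the $J$-convex function. This is precisely the deformation half of the Weinstein–Stein correspondence in \cite{CE2012}: a Weinstein homotopy issued from the Weinstein structure of a Stein manifold $(V, J, \phi)$ is realized, up to homotopy, by a path $\phi_s$ of exhausting $J$-convex functions. Feeding in the homotopy of the previous paragraph yields a Stein homotopy $(J, \phi_s)$ with $\phi_0 = \phi$ whose terminal member $(V, J, \phi_1)$ is a finite type Stein manifold with contact manifold at infinity diffeomorphic to $M'$, as desired.

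The main obstacle is exactly this last conversion. The Weinstein-to-Stein direction is the technically heavy one in \cite{CE2012}, and one must check that their deformation theorem applies to a homotopy that is only of finite type in the weak sense above, where new critical points are born outside of every compact set as the parameter runs over $[0,1]$. The point is that the subdivision and the sequences of fixed regular values built into the definition of a weinstein homotopy confine the critical points to bounded regions on each subinterval, which is the hypothesis under which the correspondence deforms $\phi$ while keeping $J$ integrable and fixed; verifying that finite type is preserved throughout the lifted Stein homotopy is the crux of the argument.
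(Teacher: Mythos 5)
Your proposal is correct and follows the paper's proof exactly: pass from $(V, J, \phi)$ to its associated Weinstein structure $(-\d\d^c\phi, \nabla_\phi \phi, \phi)$, apply Corollary \ref{mazurweinstein}, and convert the resulting Weinstein homotopy back to a Stein homotopy with $J$ fixed via the deformation theorem of \cite{CE2012} (their Theorem 15.3, which is what your ``up to homotopy'' caveat implicitly invokes: it produces an isotopy $\Psi_s$ of $V$ and an isotopy $g_s$ of $\R$ so that $(J, g_s \circ \phi_s \circ \Psi_s^{-1})$ is a Stein homotopy, whose level sets at infinity remain diffeomorphic to $M'$). Your closing remark correctly identifies the point that the subdivision and fixed regular-value sequences in the definition of a Weinstein homotopy are what make the Cieliebak--Eliashberg deformation theorem applicable despite critical points being born at infinity.
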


\begin{proof}
In the spirit of \cite{CE2012}, the proof goes from Stein to Weinstein and back. Let $(\omega = -\d \d^c \phi,
X= \nabla_{\phi} \phi, \phi)$ be the Weinstein structure associated to $(V, J, \phi)$
(see \cite{CE2012}, p.244-245). By corollary \ref{mazurweinstein}, there is a Weinstein homotopy
$(\omega_s, X_s, \phi_s)$ such that $(\omega_0, X_0, \phi_0) = (\omega, X, \phi)$ and level sets of
$\phi_1$ at infinity are diffeomorphic to $M'$. Now by theorem 15.3 in \cite{CE2012}, there is an isotopy
$\Psi_s$ of $V$ and an isotopy $g_s$ of $\R$ such that $(J, g_s \circ \phi_s \circ \Psi_s^{-1})$ is a Stein homotopy.
The level sets at infinity of $g_1 \circ \phi_1 \circ \Psi_1^{-1}$ are then diffeomorphic to $M'$.
\end{proof}

\section{Questions}
We now state a few questions that remain open.
\begin{enumerate}
\item Does there exist contact structures $\xi$ and $\xi'$ on a closed manifold $M$ that are not contactomorphic but
whose symplectizations $\S_\xi M$ and $\S_{\xi'} M$ are (exact) symplectomorphic? There are many examples of closed
manifolds $M$ of dimension $\geq 5$ for which there are non-trivial $h$-cobordisms from $M$ to itself
(see \cite{MR0196736}). A flexible Weinstein structure on such a cobordism gives two contact structures on $M$
whose symplectizations are exact symplectomorphic according to theorem \ref{mazurtricksymp} but we do not
know if they are contactomorphic or not.
\item What about contact three-manifolds? At present, no examples of non-trivial smooth $4$-dimensional
$h$-cobordisms are known (see the discussion in \cite{MR2228319}). So the method developped in this
paper will hardly apply.
\end{enumerate}

\bibliographystyle{amsalpha}
\bibliography{/Users/sylvaincourte/Dropbox/Mathematiques/Latex/biblio}

\end{document}